\newcommand{\diag}{\operatorname{diag}}
\newcommand{\Tr}{\operatorname{Tr}}
\newcommand{\I}{\mathrm{i}}
\newcommand{\wt}[1]{\widetilde{#1}}
\newcommand{\norm}[1]{\left\lVert#1\right\rVert}
\newcommand{\Or}{\mathcal{O}}
\newcommand{\CC}{\mathbb{C}}
\newtheorem{thm}{\protect\theoremname}
\theoremstyle{plain}
\newtheorem{lem}[thm]{\protect\lemmaname}
\theoremstyle{plain}
\theoremstyle{plain}
\newtheorem*{lem*}{\protect\lemmaname}
\theoremstyle{plain}
\newtheorem{prop}[thm]{\protect\propositionname}
\theoremstyle{plain}
\providecommand{\definitionname}{Definition}
\providecommand{\assumptionname}{Assumption}
\providecommand{\corollaryname}{Corollary}
\providecommand{\lemmaname}{Lemma}
\providecommand{\propositionname}{Proposition}
\providecommand{\remarkname}{Remark}
\providecommand{\theoremname}{Theorem}
\def\le{\leqslant}
\def\ge{\geqslant}
\newcommand{\eps}{\epsilon}
\newcommand{\veps}{\varepsilon}
\numberwithin{equation}{section}
\begin{document}

\title{Parallel transport dynamics for mixed quantum states \\ with applications to time-dependent density functional theory}

\author{
Dong An\thanks{
        Department of Mathematics, University of California, Berkeley,  CA 94720, USA (dong\_an@berkeley.edu)
        }, ~~
Di Fang\thanks{%
        Department of Mathematics, University of California, Berkeley,  CA 94720, USA (difang@berkeley.edu)},~~
Lin Lin\thanks{
        Department of Mathematics and Challenge Institute for Quantum Computation, University of California, Berkeley and Computational Research Division, Lawrence Berkeley National Laboratory, Berkeley, CA 94720, USA (linlin@math.berkeley.edu)
        }.
}

\date{\today}

\maketitle
\begin{abstract}
Direct simulation of the von Neumann dynamics for a general (pure or mixed) quantum state can often be expensive. 
One prominent example is the real-time time-dependent density functional theory (rt-TDDFT), a widely used framework for the first principle description of many-electron dynamics in chemical and materials systems.
Practical rt-TDDFT calculations often avoid the direct simulation of the von Neumann equation, and solve instead a set of Schr\"odinger equations, of which the dynamics is equivalent to that of the von Neumann equation.
However, the time step size employed by the Schr\"odinger dynamics is often much smaller. In order to improve the time step size and the overall efficiency of the simulation, we generalize a recent work of the parallel transport (PT) dynamics for simulating pure states [An, Lin, Multiscale Model. Simul. 18, 612, 2020] to general quantum states.
The PT dynamics provides the optimal gauge choice, and can employ a time step size comparable to that of the von Neumann dynamics.
Going beyond the linear and near adiabatic regime in previous studies, we find that the error of the PT dynamics can be bounded by certain commutators between Hamiltonians, density matrices, and their derived quantities. Such a commutator structure is not present in the Schr\"odinger dynamics.
We demonstrate that the parallel transport-implicit midpoint (PT-IM) method is a suitable method for simulating the PT dynamics, especially when the spectral radius of the Hamiltonian is large.
The commutator structure of the error bound, and numerical results for model rt-TDDFT calculations in both linear and nonlinear regimes, confirm the advantage of the PT dynamics. 
\end{abstract}

\section{Introduction}

Consider the problem of solving a finite dimensional, (possibly) nonlinear von Neumann equation 
\begin{equation} 
\label{eq:vonN}
\I \partial_t \rho(t) =  [H(t, \rho(t)), \rho(t)], \quad \rho(0)=\rho_0,
\end{equation}
where $\rho_0\in\CC^{N_g\times N_g}$ is a Hermitian matrix  satisfying $\rho^2_0\preceq \rho_0$. 
Here $[A,B]=AB-BA$ is the commutator of $A$ and $B$, and $A\preceq B$ means that $A-B$ is a negative semidefinite matrix. The initial quantum state $\rho_0$ is called a pure state if $\rho_0^2=\rho_0$, and a mixed state if $\rho_0^2\prec \rho_0$.
\cref{eq:vonN} can be used to describe the dynamics of a closed quantum system in a very general setting, and we allow the time-dependent Hamiltonian $H(t, \rho(t))\in \CC^{N_g\times N_g}$ to have a nonlinear dependence on entries of $\rho(t)$. 
One prominent application is the real-time time-dependent density functional theory (rt-TDDFT)~\cite{RungeGross1984,YabanaBertsch1996,OnidaReiningRubio2002,Ullrich2011}, which is one of the most widely used techniques for studying ultrafast
properties of electrons, and has resulted in a variety of applications in quantum physics, chemistry, and materials science. 

In practice, $\rho_0$ is often of low-rank, or can be very well approximated by a low-rank matrix. 
For simplicity, let the rank of $\rho_0$ be denoted by $N$ and assume $N\ll N_g$. The von Neumann dynamics neglects the low-rank structure and propagates $\rho(t)$ directly as a dense matrix. 
For rt-TDDFT calculations with a fine discretization scheme (e.g. the planewave basis set, or the finite difference discretization), $N_g$ can be $10^{6}$  or larger, and the direct propagation of \cref{eq:vonN} becomes extremely expensive. 
In this case, the von Neumann dynamics is often replaced by a set of  nonlinear Schr\"odinger equations (see \cref{eqn:tddft}), and the simulation variables become the electron wavefunctions described by a much smaller matrix $\Psi(t)\in\CC^{N_g\times N}$. However, such a rank reduction can come at the cost of the time step size, denoted by $h$. In many applications, $h$ for the von Neumann dynamics~\eqref{eq:vonN} can be chosen to be at the sub-femtosecond scale (1 fs$ = 10^{-15}$ s), while $h$ for the Schr\"odinger dynamics needs to be sub-attosecond scale (1 as$ = 10^{-18}$ s)~\cite{CastroMarquesRubio2004,SchleifeDraegerKanaiEtAl2012,GomezMarquesRubioEtAl2018}. 

This attosecond--femtosecond time scale separation has inspired the development of new numerical methods in the past few years~\cite{WangLiWang2015,MaWangWang2015,JiaAnWangEtAl2018,YostYaoKanai2019,AnLin2020}. In fact, when $\rho_0$ is a pure state, such a time scale separation is \textit{unphysical} and originates purely from a gauge choice of electron wavefunctions in the Schr\"odinger representation. In the simplest case, when the rank of the pure state $\rho_0$ is $1$, the gauge is only a time-dependent complex phase factor. When the rank of the pure state $\rho_0$ is $N$, the gauge is an $N\times N$ unitary matrix. The gauge choice does not affect $\rho$. In other words, a gauge transformed dynamics may solve a different set of variables from those in the Schr\"odinger dynamics, but results in the same physical observables.

Given a pure initial state, among all possible gauge choices, the parallel transport (PT) gauge~\cite{JiaAnWangEtAl2018,AnLin2020} yields the slowest gauge-transformed dynamics at any given time. Compared to the Schr\"odinger dynamics, the time step $h$ in the PT dynamics can be chosen to be much larger and is comparable to that of the von Neumann dynamics~\cref{eq:vonN}. When combined with implicit integrators (such as the Crank-Nicolson method or the implicit midpoint rule), the PT dynamics has been applied to rt-TDDFT simulations for real materials with thousands of atoms at the level of generalized gradient approximation exchange-correlation functionals (GGA, such as the Perdew--Burke--Ernzerhof~\cite{PerdewBurkeErnzerhof1996} functional)~\cite{JiaAnWangEtAl2018} and hybrid exchange-correlation functionals (such as the Heyd--Scuseria--Ernzerhof~\cite{HeydScuseriaErnzerhof2003} functional)~\cite{JiaLin2019,JiaWangLin2019}.

In previous works, the PT dynamics was derived for a pure initial state, and its efficiency has been justified in the linear, near adiabatic regime \cite{AnLin2020} in terms of a singularly perturbed linear system. The pure initial state is suitable for describing molecules and insulating materials at zero temperature. On the other hand, in practice, the initial state is often a low-energy excited state \cite{ElliottMaitra2012,FischerCramerGovind2015,BrunerHernandezMaugerEtAl2017}, or a thermal state \cite{YehManjanathChengEtAl2020} especially for metallic systems. This inspires us to consider the most general setting when $\rho_0$ is given by a mixed state (for instance, the occupation number of $\rho_0$ is given by the Fermi-Dirac distribution).

\vspace{1em}
\noindent\textbf{Contribution:}

By assuming a dynamical low-rank factorization $\rho(t)\approx \Phi(t)\sigma(t)\Phi^{\dag}(t)$, where $\Phi(t)\in \CC^{N_g\times N}$ and $\sigma(t)\in \CC^{N\times N}$, we derive the PT dynamics in terms of its low-rank factors $\Phi(t),\sigma(t)$. The PT dynamics with a pure initial state is recovered by setting $\sigma(t)=I_N$.
When the spectral radius of the Hamiltonian is large, the time step $h$ is simultaneously constrained by accuracy and stability requirements, and implicit integrators are more suited for efficient propagation of the PT dynamics. Using the implicit midpoint (IM) rule (also known as the second order Gauss-Legendre method, GL2) as an example, we derive the discretized numerical scheme, and prove that the resulting PT-IM scheme has certain orthogonality and trace-preserving properties. 

We then derive a new error bound for the discretized PT dynamics. Instead of relying on the linear quantum adiabatic theorem to obtain an \textit{a priori} error bound of the solution, our new error bound expresses the local truncation error directly in terms of the Hamiltonian, density matrix, and their derived quantities. Our analysis shows that an upper bound of the local truncation error of PT dynamics only involves certain commutators between the Hamiltonian (or its time derivatives) and the density matrix (or the associated spectral projector), while that of the Schr\"odinger gauge involves additional terms lacking such commutator structures. Using the commutator type error bound, in the near adiabatic regime when the \textit{a priori} estimate is available from the quantum adiabatic theorem, our new result shows the PT dynamics gains one extra order of accuracy in terms of the singularly perturbed parameter $\epsilon$ than the Schr\"odinger dynamics, which reproduces the previous result \cite{AnLin2020}. Recently, the quantum adiabatic theorem has been extended to certain weakly nonlinear systems~\cite{Fermanian-KammererJoye2020,GangGrech2017}. Our commutator type error analysis can be directly combined with such analysis leading to results comparable to that of~\cite{AnLin2020} in the weakly nonlinear regime. Away from the near adiabatic regime, the commutator scaling of the PT dynamics can still lead to a significantly smaller error than that of the Schr\"odinger dynamics. We illustrate the numerical performance of the PT dynamics for a number of one-dimensional model metallic systems, which also verifies the effectiveness of the new error bound.


\vspace{1em}
\noindent\textbf{Related works:}

Numerical integrators for rt-TDDFT simulation following the Schr\"odinger dynamics is a well-studied subject (see an early paper~\cite{CastroMarquesRubio2004}, and also~\cite{GomezMarquesRubioEtAl2018,RehnShenBuchholzEtAl2019} for recent comparative studies of a variety of standard numerical integrators), but the importance and the benefit of gauge-transformed dynamics have only been realized recently (see~\cite{YostYaoKanai2019} for another type of gauge-transformed dynamics using Wannier functions). 

At the continuous level, the PT dynamics is a special case of the dynamical low-rank approximation (DLRA) developed by Lubich \textit{et al.} (see \cite{KochLubich2007,Lubich2008book} for examples; DLRA is intimately related to the Dirac--Frenkel/McLachlan variational principle in the physics literature). The basic strategy of DLRA is to update a low-rank decomposition (such as eigenvalue or singular value decomposition) of a large matrix (in this case $\rho(t)$) on the fly. For a mixed initial state, a direct application of DLRA involves $\sigma^{-1}(t)$ in the equation of the low-rank factors, which in general can be a source of numerical instability~\cite{KochLubich2007,LubichOseledets2014}. Our derivation of the PT dynamics with a mixed initial state uses the structure of the von Neumann equation and can be viewed as a simplified derivation of DLRA. It also naturally shows that the pathological term $\sigma^{-1}(t)$ does not appear, so the PT dynamics is numerically stable even if one overestimates the numerical rank of $\rho(t)$. 

Regarding the time discretization, existing works of DLRA mostly use explicit integrators, although the possibility of using implicit integrators has also been mentioned in certain settings~\cite{LubichOseledets2014}. Our previous studies suggest that for rt-TDDFT calculations, the combined use of the PT dynamics and implicit integrators is the key for efficient propagation in real chemical and materials systems~\cite{JiaAnWangEtAl2018,AnLin2020}. 
The PT dynamics with a mixed initial state can also be viewed as a special case of the low-rank approximation for solving Lindblad equations by Le Bris \textit{et al.} \cite{LeBrisRouchon2013,LeBrisRouchonRoussel2015} (since the von Neumann equation can be viewed as the Lindblad equation without the decoherence operator), which is also derived independently of DLRA. It is worth pointing out that \cite{LeBrisRouchon2013} introduces an arbitrary Hermitian matrix that can be freely determined. We demonstrate that in the context of the von Neumann dynamics, setting this arbitrary matrix to $H(t)$ (the instantaneous Hamiltonian matrix), and $0$ (the zero matrix) leads to the Schr\"odinger dynamics and the PT dynamics, respectively.

\vspace{1em}
\noindent\textbf{Organization:}

The rest of the paper is organized as follows. In \cref{sec:prelim}, we introduce some preliminaries of rt-TDDFT and the PT dynamics with a pure initial state. We derive the PT dynamics with a mixed initial state in \cref{sec:pt_mixed}, and an implicit numerical propagator for the PT dynamics in \cref{sec:num_scheme}. \cref{sec:err_analysis} analyzes the numerical errors of the PT and the Schr\"odinger dynamics. Finally, we validate the error analysis with numerical results in \cref{sec:num_results}. For completeness, an alternative derivation of the PT dynamics that explicitly uses the structure of the tangent manifold (which is also a simplified derivation of~\cite{LeBrisRouchon2013}) is given in \cref{append:pt_tangent}. 
\cref{sec:anderson} describes Anderson's mixing method for solving the set of nonlinear equations in the PT-IM method.

\section{Preliminaries} \label{sec:prelim}

In this section, we briefly review the key idea of deriving PT dynamics with a pure initial state \cite{AnLin2020}. 
In the setting with a pure initial state, real-time time-dependent density functional theory (rt-TDDFT) solves the following set of Schr\"odinger equations
\begin{equation} 
  \I  \partial_{t} \Psi(t) = H(t,\rho(t)) \Psi(t), \quad \Psi(0)=\Psi_0.
  \label{eqn:tddft}
\end{equation}
Here $\Psi(t)=[\psi_{1}(t),\ldots,\psi_{N}(t)]$ is the collection of electron wavefunctions (also called electron orbitals), and the number of columns $N$ is equal to the number of electrons denoted by $N_e$ (spin degeneracy omitted).
The initial set of wavefunctions satisfy the orthonormality condition $\Psi(0)^{\dag}\Psi(0)=I_N$. Here $A^{\dag}$ denotes the Hermitian conjugate of a matrix or vector $A$.
The density matrix is $\rho(t) = \Psi(t)\Psi^{\dag}(t)\equiv \sum_{i=1}^N \psi_i(t)\psi_i^{\dag}(t)$, and in particular $\rho_0:=\rho(0)=\Psi(0)\Psi(0)^{\dag}$ is a pure state satisfying $\rho_0^2=\rho_0$. 

Throughout the paper we are concerned with time propagation instead of spatial discretization. Unless otherwise specified, \cref{eqn:tddft} represents a discrete, finite dimensional quantum system, i.e. $H(t, \rho)$ is a Hermitian matrix with finite dimension $N_g$. If the quantum system is spatially continuous, we may first find a set of orthonormal basis functions and expand the continuous wavefunction under this basis. Then after a Galerkin projection, \cref{eqn:tddft} becomes an $N_g$-dimensional quantum system, and $\psi_j(t)$ represents the coefficient vector under the basis for the $j$-th wavefunction. 


The time-dependent Hamiltonian operator $H(t,\rho(t))$ is Hermitian for all $t$ and $\rho$, and its precise form is not important for the purpose of this paper. Starting from a pure initial state $\rho_0$, the orthogonality condition $\Psi(t)^{\dag}\Psi(t)=I_N$ is satisfied for all $t\ge 0$, and hence $\rho(t)$ is a pure state for all $t$ satisfying $\rho^2(t)=\rho(t)$. Throughout the paper, we may use the notations $\partial_t\rho=\rho_t=\dot{\rho}$ interchangeably for the time-derivatives. For composite functions such as $H(t,\rho(t))$, we use the notation $\dot{H} : = \frac{d}{dt} H(t,\rho(t)) = H_t +  H_\rho \rho_t$, where the tensor contractions are defined such that the chain rule holds. For example, the tensor contraction between the 4-tensor $H_\rho$ and the matrix $\rho_t$ are defined such that the chain rule $\frac{d}{dt} H(t, \rho(t)) = H_t + H_\rho \rho_t$ follows the element-wise operation
$$\frac{d}{dt} H_{ij}(t,\rho(t))  = \partial_t H_{ij}(t,\rho(t)) 
+ \sum_{k,l} \frac{\partial H_{ij}}{\partial \rho_{kl}}(t,\rho(t))  \frac{\partial \rho_{kl}(t)}{\partial t}.
$$

The set of Schr\"odinger equations \eqref{eqn:tddft} is equivalent to the von Neumann dynamics \eqref{eq:vonN}. 
Note that if we right multiply $\Psi(t)$ by a time-dependent unitary matrix $U(t)\in \CC^{N\times N}$ and let $\Phi(t)=\Psi(t)U(t)$, then
\begin{equation}
  \rho(t)= \Psi(t)\Psi^{\dag}(t) =  \Phi(t)\left[U^{\dag}(t)U(t)\right]\Phi^{\dag}(t)
  = \Phi(t)\Phi^{\dag}(t).
  \label{eqn:dmgauge}
\end{equation}
The unitary rotation matrix $U(t)$ is called the gauge matrix, and \cref{eqn:dmgauge} indicates that the density matrix is \textit{gauge-invariant}. In particular, the choice $U(t)=I_N$ is referred to as the Schr\"odinger gauge.

Since all physical observables can be derived from the von Neumann equation \eqref{eq:vonN} and the density matrix $\rho(t)$, the choice of the gauge matrix $U(t)$ has no measurable effects. 
On the other hand, the gauge matrix introduces additional degrees of freedom, and can oscillate at a different time scale from that of the corresponding wavefunctions. 
It is then desirable to optimize the gauge matrix, so that 
the transformed wavefunctions $\Phi(t)$ vary \emph{as slowly as
possible}, without changing the density matrix. This results in the following variational problem
\begin{equation}\label{eqn:minproblem}
  \min_{U(t)} \quad \norm{\dot{\Phi}}^2_{F},
  \ \text{s.t.} \ \Phi(t) = \Psi(t)U(t), U^{\dag}(t)U(t)=I_{N}.
\end{equation}
Here $\norm{\dot{\Phi}}^2_{F}:=\Tr[\dot{\Phi}^{\dag}\dot{\Phi}]$
measures the Frobenius norm of the time derivative of the transformed
orbitals. 

The minimizer of~\eqref{eqn:minproblem}, in terms of $\Phi$, 
satisfies the following equation
\begin{equation}
  \rho\dot{\Phi}=0.
  \label{eqn:PTcondition}
\end{equation}
We refer readers to \cite{AnLin2020} for the derivation. 
\cref{eqn:PTcondition} has an intuitive explanation that the optimal dynamics should minimize the ``internal'' rotations within the range of $\rho$.
\cref{eqn:PTcondition} implicitly defines a gauge
choice for each $U(t)$, and this gauge is called the \emph{parallel transport
gauge}. The name ``parallel transport'' comes from that 
$\Phi(t)$ can be identified as the unique horizontal lift~\cite{Nakahara2003} of $\rho(t)$ from the Grassmann manifold to the Stiefel manifold, starting from the initial condition $\Psi_0$. This will be further explained in \cref{sec:err_analysis}.

From \cref{eqn:PTcondition}, the governing equation of $\Phi(t)$ can be concisely written down as
\begin{equation}
  \I \partial_t \Phi(t) = H(t,\rho(t))\Phi(t) -
  \Phi(t)(\Phi^{\dag}(t)H(t,\rho(t))\Phi(t)), \quad \Phi(0)=\Psi_0,
  \label{eqn:pt}
\end{equation}
where $\rho(t) = \Phi(t)\Phi^{\dag}(t)$. 
Notice that \cref{eqn:pt} introduces one extra term compared to the original dynamics \cref{eqn:tddft} under Schr\"odinger gauge, and directly provides a self-contained definition of the transformed wavefunctions under the optimal gauge. 
In practice, we can directly solve \cref{eqn:pt} by numerical schemes to approximate the dynamics, instead of computing the PT gauge explicitly. 

To observe the advantage of the parallel transport dynamics,
consider the extreme case that each column of $\Psi_0$ is already an eigenstate of $H(0)$ and $H(t,\rho(t))\equiv H(0)$ is a time-independent matrix. Then \cref{eqn:pt} is reduced to
\[
\I \partial_t \Phi(t) =0.
\]
Hence $\Phi(t)=\Phi(0)$ holds for all $t\ge 0$, while each column of the solution Schr\"odinger dynamics \eqref{eqn:tddft} rotates with a time-dependent phase factor. For less trivial dynamics, the temporal oscillation of gauge-transformed wavefunctions $\Phi(t)$ can still be significantly slower than that of $\Psi(t)$.

\section{Parallel transport dynamics with a mixed initial state} \label{sec:pt_mixed}

In rt-TDDFT calculations, the pure initial state can be used for simulating insulating systems starting from the ground state, or a well-defined excited state. In many other cases the initial state should be a mixed state. For instance, for metallic systems at finite temperature, the initial state often takes the form of the Fermi-Dirac distribution
\begin{equation} \label{eq:fermi_dirac_initial}
\rho(0) = \left(1 + \exp{(\beta(H(0) -\mu) )}\right)^{-1},
\end{equation}
where $\beta = 1/(k_\text{B}T)$, $k_\text{B}$ is the Boltzmann constant, $T$ is the temperature. The chemical potential $\mu$ is a Lagrange multiplier, which should be adjusted to satisfy the normalization condition 
\begin{equation}
\Tr[\rho(0)]=N_e, 
\label{eqn:rho0_normalize}
\end{equation}
where $N_e$ is the number of electrons.
If we diagonalize $H(0)$ according to $H(0)\psi_i(0)=\veps_i(0)\psi_i(0)$, then the occupation number 
\[
s_i(0):=\braket{\psi_i(0)|\rho(0)|\psi_i(0)}=\left(1 + \exp{(\beta(\veps_i(0) -\mu) )}\right)^{-1}.
\]
Hence when $\beta$ is large (e.g. at room temperature $300$K, $\beta\approx 10^3$ in the atomic unit), $s_i(0)$ is very close to $0$ when $\veps_i(0)-\mu \gg \beta^{-1}$. Therefore, $\rho(0)$ can be very well approximated by a low rank matrix, with its approximate rank denoted by $N$. In other words, we can set
\[
\rho(0)=\sum_{i=1}^{N} \psi_i(0)s_i(0)\psi_i^{\dag}(0)=\Psi(0)\sigma(0)\Psi^{\dag}(0),
\]
with the chemical potential $\mu$ slightly adjusted so that the normalization condition \eqref{eqn:rho0_normalize} is still satisfied. Here $\sigma_0:=\sigma(0)=\diag[s_1(0),\ldots,s_N(0)]$ is a diagonal matrix. Since the occupation number satisfies $0< s_i(0)< 1$, we have $N> N_e$, and $\rho^2(0)\prec \rho(0)$. We also assume $N_g\gg N$.

If we solve the nonlinear Schr\"odinger equation \eqref{eqn:tddft} to obtain $\Psi(t)$, then
\begin{equation} \label{eqn:rho_def_schd}
\rho(t)=\Psi(t)\sigma_0\Psi^{\dag}(t),
\end{equation}
is the unique solution of \cref{eq:vonN} (viewed as a large ODE system) with the initial state $\rho(0)$. Hence in practice, we only need to solve \cref{eqn:tddft} in the same way as for the pure initial state, but weigh the contribution of each time-dependent vector $\psi_i(t)$ always by the \textit{initial} occupation number $\sigma_i(0)$. This fact that the occupation number $\sigma(t)$ remains as a constant matrix $\sigma_0$ can also be derived directly (see \cref{eq:sch_main} in \cref{append:pt_tangent}). 
However, similar to the case with a pure initial state, \cref{eqn:tddft} can require a relatively small time step size. 

Note that we may still apply a gauge matrix $U(t)\in\CC^{N\times N}$ and define $\Phi(t)=\Psi(t)U(t)$ with initial condition $U(0)=I_N$. In such a case, we must also redefine the occupation number matrix as 
\begin{equation}
\sigma(t)=U^{\dag}(t)\sigma_0 U(t),
\label{eqn:sigma_t}
\end{equation}
so that
\begin{equation}
\rho(t)=\Phi(t)\sigma(t)\Phi^{\dag}(t)
\label{eqn:rhot_rotate_mixed}
\end{equation}
is satisfied. Here $\sigma(t)$ is now a Hermitian matrix of size $N$ and may no longer be diagonal for $t > 0$. 
We would like to solve again the optimization problem in \cref{eqn:minproblem} so that the gauge-transformed wavefunctions $\Phi(t)$ vary as slowly as possible. This leads to \cref{eqn:PTcondition} and hence \cref{eqn:pt}, with $\rho(t)$ defined in \cref{eqn:rhot_rotate_mixed}. For simplicity, we may also define a gauge-invariant projector
\[
P(t)=\Psi(t)\Psi^{\dag}(t)=\Phi(t)\Phi^{\dag}(t),
\]
so that \cref{eqn:pt} can be rewritten as
\[
\I \partial_t \Phi(t) = (I-P(t))H(t,\rho(t))\Phi(t).
\]
Here the identity matrix is given as $I=I_{N_g}$, and we have used that $P(t)\Phi(t)=\Phi(t)$. 

In order to close the equation, it remains to identify the equation of motion of $\sigma(t)$. First, by differentiating the equation $ \Psi(t) U(t) = \Phi(t) $
and using \eqref{eqn:pt}, we may derive the dynamics of the gauge $U(t)$, i.e.
\[
(\I\partial_t \Psi(t)) U(t)+\Psi(t)(\I\partial_t U(t))=H(t,\rho(t))\Psi(t)U(t)-\Psi(t)\Psi^{\dag}(t)H(t,\rho(t))\Psi(t)U(t).
\]
This gives
\begin{equation}
\I \partial_t U(t)=-(\Psi^{\dag}(t)H(t,\rho(t))\Psi(t))U(t).
\label{eqn:gauge_dynamics}
\end{equation}
By differentiating both sides of \cref{eqn:sigma_t} and using \cref{eqn:gauge_dynamics}, we have
\[
\begin{split}
\I \partial_t \sigma(t)&=(\I \partial_t U^{\dag}(t))\sigma_0 U(t)+U^{\dag}(t)\sigma_0 (\I \partial_t U(t))\\
&= U^{\dag}(t) (\Psi^{\dag}(t)H(t,\rho(t))\Psi(t))\sigma_0 U(t)-U^{\dag}(t)\sigma_0 (\Psi^{\dag}(t)H(t,\rho(t))\Psi(t))U(t)\\
&= \Phi^{\dag}(t)H(t,\rho(t))\Phi(t)\sigma(t)-\sigma(t)\Phi^{\dag}(t)H(t,\rho(t))\Phi(t)\\
&= [\Phi^{\dag}(t)H(t,\rho(t))\Phi(t),\sigma(t)].
\end{split}
\]
Therefore the equation of motion for $\sigma(t)$ only depends on the slowly varying gauge-transformed wavefunctions $\Phi(t)$.

In summary, the parallel transport dynamics with a general initial state consists of the following set of equations
\begin{equation}
\begin{split}
  \I \partial_t \Phi(t) &= (I-P(t))H(t,\rho(t))\Phi(t), \\
  \I \partial_t \sigma(t)&=[\Phi^{\dag}(t)H(t, \rho(t))\Phi(t),\sigma(t)],\\
  \rho(t)&=\Phi(t)\sigma(t)\Phi^{\dag}(t), \quad P(t)=\Phi(t)\Phi^{\dag}(t),\\
  \Phi(0)&=\Psi_0, \quad \sigma(0)=\sigma_0.
\end{split}
\label{eqn:pt_mix}
\end{equation} 

\cref{eqn:pt_mix} gives a self-contained definition of the transformed wavefunctions $\Phi(t)$ and the matrix $\sigma(t)$ under the optimal gauge. 
Therefore, we can directly solve \cref{eqn:pt_mix} to numerically approximate the state $\rho(t)$ without computing the PT gauge matrix explicitly. 
Notice that, compared to the Schr\"odinger dynamics in which one can set $\sigma(t) = \sigma(0)$, the PT dynamics in \cref{eqn:pt_mix} introduces one extra nonlinear term in the propagation of the wavefunctions, and enlarges the size of the ODE system via a non-trivial dynamics of the transformed $\sigma(t)$. 
This is different from the pure state setting where only an extra term in the equation of $\Phi$ is added. 
However, due to the assumption that $N_g \gg N$, the increase of the number of variables by $N^2$ due to $\sigma(t)$ does not add too much overhead in the numerical simulation. 

In \cref{append:pt_tangent}, we provide an alternative derivation of \cref{eqn:pt_mix} using the tangent space formulation, which follows the derivation in \cite{LeBrisRouchon2013,LeBrisRouchonRoussel2015} for Lindblad equations, and is more analogous to the derivation of the dynamical low-rank approximation. The derivation also provides an alternative perspective of the gauge choice in terms of an auxiliary Hamiltonian.

\section{Numerical propagation of the parallel transport dynamics}
\label{sec:num_scheme}

In order to solve \cref{eqn:pt_mix} numerically, for simplicity we assume that a uniform time discretization $t_{n}=n h$, and $h$ is the
time step size. The numerical values of $\Phi(t),\sigma(t),\rho(t),P(t)$ at time $t=t_n$ are denoted by $\Phi_n,\sigma_n,\rho_n,P_n$, respectively, and we define $H_{n}=H(t_{n},\rho_{n})$. Previous studies suggested that when the spectral radius of $H$ is large, the PT dynamics should be solved using implicit time integrators~\cite{JiaAnWangEtAl2018,AnLin2020}. 
This allows one to use a time step much larger than $\norm{H}^{-1}$, and the result from the PT dynamics can be much more accurate than that from the Schr\"odinger dynamics using the same step size.

In order to discretize the PT dynamics with a mixed initial state, we consider the implicit midpoint (IM) rule (also known as the Gauss-Legendre method of order 2). We introduce the shorthand notations
\begin{equation}
\Phi_{n+\frac12}=\frac12(\Phi_n+\Phi_{n+1}), \quad \sigma_{n+\frac12}=\frac12(\sigma_n+\sigma_{n+1}),
\label{eqn:PT_GL2_midpoint}
\end{equation}
and accordingly
\begin{equation}
P_{n+\frac12} = \Phi_{n+\frac12} (\Phi^{\dag}_{n+\frac12}\Phi_{n+\frac12})^{-1}\Phi_{n+\frac12}^\dagger, \quad 
\rho_{n+\frac12} = \Phi_{n+\frac12}\sigma_{n+\frac12} \Phi_{n+\frac12}^\dagger, \quad 
H_{n+\frac12} = H\left(t_{n+\frac12}, \rho_{n+\frac12}\right).
\label{eqn:half_step}
\end{equation}
We remark that $\rho_{n+\frac12}$ is only a shorthand notation and may not be an admissible density matrix. In particular, even if 
$\Tr[\rho_n]=\Tr[\rho_{n+1}]=N_e$ (see \cref{prop:ortho}), we may not have $\Tr[\rho_{n+\frac12}]=N_e$.  
On the other hand, $P_{n+\frac12}$ is still a projector satisfying $P_{n+\frac12}\Phi_{n+\frac12}=\Phi_{n+\frac12}$. 

With these notations, the parallel transport-implicit midpoint scheme (PT-IM) reads
\begin{align}
\I \frac{\Phi_{n+1} - \Phi_n}{h} &=  (I - P_{n+\frac12})H_{n+\frac12} \Phi_{n+\frac12},\label{eqn:PT_GL2_Phi}\\
\I \frac{\sigma_{n+1} -\sigma_n}{h} & =\left[\Phi_{n+\frac12}^\dagger H_{n+\frac12} \Phi_{n+\frac12}, \sigma_{n+\frac12}\right] \label{eqn:PT_GL2_Sigma},
\end{align}
which form a set of nonlinear algebraic equations and need to be solved self-consistently. We can rewrite \cref{eqn:PT_GL2_Phi,eqn:PT_GL2_Sigma} as 
\begin{equation}
\begin{split}
\Phi_{n+1}&=\Phi_n + \frac{h}{\I } (I - P_{n+\frac12})H_{n+\frac12} \Phi_{n+\frac12},\\
\sigma_{n+1} &=\sigma_n + \frac{h}{\I }\left[\Phi_{n+\frac12}^\dagger H_{n+\frac12} \Phi_{n+\frac12}, \sigma_{n+\frac12}\right].
\end{split}
\label{eqn:one_step}
\end{equation}
If we choose $(\Phi_{n+1},\sigma_{n+1})$ to be the unknowns and identify it with a vector $x\in\CC^{N_g N+N^2}$, then \cref{eqn:PT_GL2_Phi,eqn:PT_GL2_Sigma} can be viewed as a fixed point equation in the abstract form 
\[
x=T(x).
\]
The structure of this fixed point problem resembles that of the self-consistent field iterations (SCF) in 
standard electronic structure calculations~\cite{Martin2008}. 
Here we use Anderson's mixing method \cite{Anderson1965} to solve this fixed problem (see \cref{sec:anderson}).

The following proposition shows that PT-IM preserves the orthogonality as well as the trace condition. 

\begin{prop}\label{prop:ortho}
Assume $\Phi_n^{\dag}\Phi_n=I_N$, $\sigma_n=\sigma_n^{\dag}$, $\Tr[\sigma_n]=N_e$, and that \cref{eqn:PT_GL2_Phi,eqn:PT_GL2_Sigma} have a unique solution $(\Phi_{n+1},\sigma_{n+1})$, then the solution satisfies
\begin{equation}
\Phi_{n+1}^{\dag}\Phi_{n+1}=I_N, 
\label{eqn:phi_conserve}
\end{equation}
and
\begin{equation}
\sigma^{\dag}_{n+1}=\sigma_{n+1},\quad \Tr[\sigma_{n+1}]=N_e, \quad \Tr[\sigma_{n+1}^2]=\Tr[\sigma_n^2]. \label{eqn:sigma_conserve}
\end{equation}
As a consequence, we have $\Tr[\rho_{n+1}]=\Tr[\rho_n]=N_e$. 
\end{prop}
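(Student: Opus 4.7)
The plan is to verify each claim by direct manipulation of \eqref{eqn:PT_GL2_Phi}--\eqref{eqn:PT_GL2_Sigma}, exploiting the symmetry of the implicit midpoint rule together with two structural facts: $P_{n+\frac12}$ is the Hermitian orthogonal projector onto the column span of $\Phi_{n+\frac12}$, so $\Phi_{n+\frac12}^\dag(I - P_{n+\frac12}) = 0$; and the trace of any commutator vanishes. For the orthogonality \eqref{eqn:phi_conserve}, I would begin with the identity
\begin{equation*}
\Phi_{n+1}^\dag\Phi_{n+1} - \Phi_n^\dag\Phi_n = \Phi_{n+\frac12}^\dag(\Phi_{n+1}-\Phi_n) + (\Phi_{n+1}-\Phi_n)^\dag\Phi_{n+\frac12},
\end{equation*}
which follows from $\Phi_{n+1} + \Phi_n = 2\Phi_{n+\frac12}$. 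Substituting \eqref{eqn:PT_GL2_Phi} produces a factor $\Phi_{n+\frac12}^\dag(I-P_{n+\frac12}) = 0$ in each term on the right-hand side.

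For the three properties of $\sigma_{n+1}$ in \eqref{eqn:sigma_conserve}, set $A := \Phi_{n+\frac12}^\dag H_{n+\frac12}\Phi_{n+\frac12}$, which is Hermitian because $H(t,\rho)$ is Hermitian for all $t$ and $\rho$. Trace preservation $\Tr[\sigma_{n+1}] = \Tr[\sigma_n]$ is immediate from taking the trace of \eqref{eqn:PT_GL2_Sigma}. For conservation of $\Tr[\sigma^2]$, the decomposition $\sigma_{n+1}^2 - \sigma_n^2 = \sigma_{n+\frac12}\Delta + \Delta\sigma_{n+\frac12}$ with $\Delta := \sigma_{n+1}-\sigma_n$ yields a trace of $2\Tr[\sigma_{n+\frac12}\Delta]$; substituting \eqref{eqn:PT_GL2_Sigma} reduces this to $(2h/\I)\Tr[\sigma_{n+\frac12}[A,\sigma_{n+\frac12}]] = 0$ by cyclicity. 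For Hermiticity, I would take the Hermitian conjugate of \eqref{eqn:PT_GL2_Sigma} (using $A = A^\dag$ and $\sigma_n = \sigma_n^\dag$) and subtract from the original equation. Setting $K := \sigma_{n+1}-\sigma_{n+1}^\dag$, this produces the closed linear relation $\I K = (h/2)[A,K]$. Since the superoperator $\mathrm{ad}_A : X \mapsto [A,X]$ is self-adjoint in the Frobenius inner product, its spectrum is real, so $\I$ cannot be an eigenvalue of $(h/2)\mathrm{ad}_A$ and hence $K = 0$.

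Finally $\Tr[\rho_{n+1}] = \Tr[\Phi_{n+1}\sigma_{n+1}\Phi_{n+1}^\dag] = \Tr[\sigma_{n+1}\Phi_{n+1}^\dag\Phi_{n+1}] = \Tr[\sigma_{n+1}] = N_e$ by cyclicity and the orthogonality established above. I expect the Hermiticity of $\sigma_{n+1}$ to be the one nontrivial step: unlike the other properties, which each reduce to a single use of $\Phi_{n+\frac12}^\dag(I - P_{n+\frac12}) = 0$ or of trace cyclicity, it requires packaging the equation together with its Hermitian conjugate into a linear equation for $K$ and then appealing to the spectral structure of $\mathrm{ad}_A$ to rule out nontrivial solutions.
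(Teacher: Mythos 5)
Your proposal is correct, and for the orthogonality, the trace of $\sigma$, the trace of $\sigma^2$, and the final trace of $\rho$ it is essentially the paper's argument in slightly repackaged form: the paper tests \eqref{eqn:PT_GL2_Phi} against $\Phi_{n+\frac12}^{\dag}$ and then splits the result into Hermitian and anti-Hermitian parts, whereas your identity $\Phi_{n+1}^{\dag}\Phi_{n+1}-\Phi_n^{\dag}\Phi_n=\Phi_{n+\frac12}^{\dag}(\Phi_{n+1}-\Phi_n)+(\Phi_{n+1}-\Phi_n)^{\dag}\Phi_{n+\frac12}$ performs that symmetrization up front; likewise your decomposition $\sigma_{n+1}^2-\sigma_n^2=\sigma_{n+\frac12}\Delta+\Delta\sigma_{n+\frac12}$ is the symmetrized form of the paper's left-multiplication by $\sigma_{n+\frac12}$. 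The one place where you genuinely diverge is the Hermiticity of $\sigma_{n+1}$. The paper conjugates the $\sigma$-equation and then invokes the \emph{assumed uniqueness} of the solution to conclude $\sigma_{n+1}^{\dag}=\sigma_{n+1}$ (an argument that is slightly delicate because $\widetilde{H}$ itself depends on $\sigma_{n+1}$ through $\rho_{n+\frac12}$). You instead subtract the conjugated equation from the original, obtain the closed relation $\I K=\tfrac{h}{2}[A,K]$ for $K=\sigma_{n+1}-\sigma_{n+1}^{\dag}$ with $A$ the fixed Hermitian matrix evaluated at the actual solution, and kill $K$ by noting that $\mathrm{ad}_A$ is self-adjoint in the Frobenius inner product and so cannot have the non-real eigenvalue $2\I/h$. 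This buys you something: Hermiticity follows from the mere existence of a solution, without appealing to the uniqueness hypothesis, and it sidesteps the ambiguity about which "same equation" the conjugate solves. Both routes are valid; yours is the more self-contained one for that step.
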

\begin{proof}
First, use the definition in \eqref{eqn:PT_GL2_midpoint} and apply $\Phi_{n+\frac12}^\dagger$ to both sides of \eqref{eqn:PT_GL2_Phi}, we obtain
\[
\frac{\I}{2h}(\Phi_{n+1}^{\dag} \Phi_{n+1}-\Phi_{n}^{\dag} \Phi_{n}) - \frac{\I}{2h}(\Phi_{n+1}^{\dag} \Phi_{n}-\Phi_{n}^{\dag} \Phi_{n+1})=\Phi^{\dag}_{n+\frac12}(I - P_{n+\frac12})H_{n+\frac12}\Phi_{n+\frac12}=0.
\]
On the left-hand side, the first term is anti-Hermitian and the second term is Hermitian. So both terms must vanish, and 
\[
\Phi_{n+1}^{\dag}\Phi_{n+1}=\Phi_{n}^{\dag}\Phi_{n}=I_N.
\]
This proves \cref{eqn:phi_conserve}.

Second, denote by $\wt{H}:=\Phi_{n+\frac12}^\dagger H_{n+\frac12} \Phi_{n+\frac12}$, we may solve the equation
\[
\sigma_{n+1}-\sigma_n=-\frac{\I h}{2}[\wt{H},\sigma_n]-\frac{\I h}{2}[\wt{H},\sigma_{n+1}]
\]
to obtain $\sigma_{n+1}$. Applying the Hermite conjugation to both sides and using that $\wt{H},\sigma_n$ are Hermitian matrices, we have
\[
\sigma^{\dag}_{n+1}-\sigma_n=-\frac{\I h}{2}[\wt{H},\sigma_n]-\frac{\I h}{2}[\wt{H},\sigma^{\dag}_{n+1}].
\]
The uniqueness of $\sigma_{n+1}$ implies $\sigma_{n+1}=\sigma^{\dag}_{n+1}$. Moreover, since the right-hand side of \cref{eqn:PT_GL2_Sigma} is traceless, we have $\Tr[\sigma_{n+1}]=\Tr[\sigma_n]$.

Finally, applying $\sigma_{n+\frac12}$ from the left to both sides of \cref{eqn:PT_GL2_Sigma}, we have
\[
\frac{\I}{2h} (\sigma_{n+1}^2-\sigma_n^2-\sigma_{n+1}\sigma_n+\sigma_n \sigma_{n+1})=\sigma_{n+\frac12}[\wt{H}, \sigma_{n+\frac12}].
\]
Since the right-hand side is traceless, by taking trace of both sides we obtain 
\[
\Tr[\sigma_{n+1}^2]=\Tr[\sigma_{n}^2].
\]
This finishes the proof of the equalities in \eqref{eqn:sigma_conserve}. 
\end{proof}

\cref{eqn:phi_conserve} can be viewed as a consequence of the general fact that the PT-IM method preserves quadratic invariants, and in particular orthogonality constraints (see e.g. \cite[pp 132]{HairerLubichWanner2006} for a more general description of orthogonality preserving Runge-Kutta methods). \cref{prop:ortho} confirms that the PT-IM scheme preserves orthogonality of $\Phi(t)$, as well as the number of electrons.

\section{Error analysis}  \label{sec:err_analysis}
In this section, we consider the numerical error of the PT-IM scheme for concreteness, and compare the form of the error terms with those from the Schr\"odinger dynamics. The error analysis can also be extended to other Runge-Kutta methods and linear multistep methods.

\subsection{Error analysis of the PT dynamics} \label{sec:err_analysis_pt}

Before proceeding with the detailed error analysis, we first provide some abstract perspectives. Let the local truncation error be defined as $e_k(X) = X(t_k) - \widetilde{X}_k$, 
where $t_k = kh$ and $\widetilde{X}_k$ represents the numerical solution of $X$ at the 
$k$-th step with previous step to be exactly $X(t_{k-1})$, where $X$ is the concatenation of $\Phi$ and $\sigma$, namely $\begin{pmatrix} \Phi \\ \sigma\end{pmatrix}$. Since IM is a second order method, the local truncation error can be bounded in terms of the third order derivatives \cite{HairerNorsettWanner1987}
\begin{equation} \label{eqn:lte_ineq}
    \norm{e_k(X)} \le C\max_{t\in[t_{k-1},t_k]}\norm{\partial_t^3X(t)} h^3.
\end{equation}
Here $C$ is an absolute constant depending only on the choice of the numerical scheme.

 Note that $P$ is a rank-$N$ projector, and can be identified with the Grassmann manifold $\mathrm{Gr}(N_g,N;\CC)$, i.e. the $N$-dimensional subspace of $\CC^{N_g}$. On the other hand, the gauge-transformed wavefunctions $\Phi$ belongs to the Stiefel manifold $\mathrm{St}(N_g,N;\CC)$, which is the set of first $N$ columns of an $N_g$-dimensional unitary matrices. The Grassmann manifold is the quotient space of $\mathrm{St}(N_g,N;\CC)$ by $\mathrm{U}(N)$, denoted by
\[
\mathrm{Gr}(N_g,N;\CC)=\mathrm{St}(N_g,N;\CC)/\mathrm{U}(N)
\]
The projector $P(t)$ can be identified with a curve in $\mathrm{Gr}(N_g,N;\CC)$, obtained by solving the von Neumann equation. On the other hand, the wavefunctions $\Psi(t),\Phi(t)$ in the Schr\"odinger and the parallel transport gauge are \textit{lifts} of the curve $P(t)$ from the quotient space to $\mathrm{St}(N_g,N;\CC)$. In particular, $\Phi(t)$ can be identified as the unique \textit{horizontal lift}~\cite{Nakahara2003} of $P(t)$, starting from the initial condition $\Psi_0$ (which fixes a gauge choice initially). We have demonstrated that the parallel transport gauge yields the slowest dynamics in the sense of minimizing $\norm{\partial_t \Phi}_F$.
For simplicity, in the following discussions we will consider the operator norm $\norm{\cdot}$ for $\Phi$, $P$ and their time derivative. We expect that the size of the $k$-th order time derivative $\norm{\partial_t^k \Phi}$ should also be bounded by that of $\norm{\partial_t^k P}$. On the other hand, $\norm{\partial_t^k\Psi}$ may not be bounded by $\norm{\partial_t^k P}$ due to the gauge matrix.

Recall the relation
\[
P\Phi=\Phi, \quad P\partial_t \Phi=0,
\]
and this gives
\[
\partial_t\Phi=(\partial_t P) \Phi.
\]
Keep differentiating and obtain
\[
\partial^2_t \Phi =[\partial^2_t P+(\partial_t P)^2]\Phi, 
\quad
\partial^3_t \Phi = [\partial_t^3 P + 2 (\partial_t^2 P) (\partial_t P) + (\partial_t P) (\partial_t^2 P) +(\partial_t P)^3] \Phi.
\]
Using the fact that $\norm{\Phi}=1$, 
we have
\[
\norm{\partial_t\Phi}\le \norm{\partial_t P}.
\]
Similarly
\[
\norm{\partial^2_t\Phi}\le \norm{\partial^2_t P+(\partial_t P)^2}\le \norm{\partial^2_t P}+\norm{(\partial_t P)}^2, 
\]
and
\[
\norm{\partial^3_t\Phi}\le \norm{\partial^3_t P}+3\norm{\partial^2_t P} \norm{\partial_t P}+\norm{\partial_t P}^3.
\]
This implies that $\norm{\partial_t^{k} \Phi}$ is controlled by  $\norm{\partial_t^{\ell} P}$ with $\ell\le k$. 
On the other hand, 
\[
\sigma = \Phi^\dagger \rho \Phi
\]
implies that the time derivative $\norm{\partial_t^k \sigma(t)}$ is controlled by $\norm{\partial_t^{\ell} \Phi}$ and $\norm{\partial_t^{\ell} \rho}$ with $\ell \le k$. To be specific, a direct computation gives
\begin{align*}
        \partial_t^3 \sigma =& (\partial_t^3 \Phi^\dagger )\rho \Phi + \Phi^\dagger (\partial_t^3 \rho) \Phi + \Phi^\dagger \rho (\partial_t^3 \Phi) + 6 (\partial_t \Phi^\dagger) (\partial_t \rho) (\partial_t \Phi)\\
    &+ 3 (\partial_t^2 \Phi^\dagger) (\partial_t \rho )\Phi + 3 (\partial_t^2 \Phi^\dagger )\rho (\partial_t \Phi) + 3 (\partial_t \Phi^\dagger) (\partial_t^2 \rho) \Phi + 3 (\partial_t \Phi^\dagger) \rho (\partial_t^2 \Phi) + 3 \Phi^\dagger (\partial_t^2 \rho) (\partial_t \Phi) + 3 \Phi^\dagger (\partial_t \rho) (\partial_t^2 \Phi),
\end{align*}
and hence
\begin{align*}
        \norm{\partial_t^3 \sigma} \le& 2\norm{ \partial_t^3 \Phi} 
        + \norm{\partial_t^3 \rho}  
        + 6 \norm{\partial_t \Phi^\dagger} \norm{\partial_t\rho} \norm{\partial_t \Phi}\\
    &
    + 6 \norm{\partial_t^2 \Phi} \norm{\partial_t \rho} 
    + 6 \norm{\partial_t^2 \Phi} \norm{\partial_t \Phi}  
    + 6 \norm{\partial_t \Phi}\norm{\partial_t^2 \rho},
\end{align*}
where we used the facts that $\norm{\Phi}=1$ and $\norm{\rho}\le 1$.

To summarize, the error analysis of the PT dynamics is reduced to the estimate of $\norm{\partial^{k}_t P}$ and $\norm{\partial_t^{k} \rho}$. In particular, for the analysis of PT-IM, we need $k\le 3$.

\begin{lem}\label{lem:der_P}
Suppose $H(t,\rho)$ is continuously differentiable in terms of $t$ and $\rho$ up to second order. Then the derivatives of $P$ satisfy
\begin{align} \label{eq: estimate_der_P}
\norm{\partial_t P}  \le &  \norm{[H,P]}, \\
\norm{\partial_t^2 P}  \le & \norm{[H_t, P]} + \norm{H_\rho[H,\rho]} + \norm{[H, [H,P]]} , 
\label{eq: estimate_der_P_eq2}\\
\norm{\partial_t^3 P}  \le & \norm{ [H_{tt},P] } + 2 \norm{(H_{t})_\rho [H,\rho]}  + \norm{H_{\rho\rho} ([H,\rho])^2 }  
\nonumber 
 + \norm{ H_\rho [H_t,\rho]  } \\ & + \norm{ H_\rho [H_\rho [H,\rho],\rho]}  + \norm{ H_\rho  [H,[H, \rho]]} \nonumber
 + 2\norm{ [H_t, [H,P]]} + 2\norm{ [H_\rho [H,\rho], [H,P]] }  \\&
+ \norm{[H, [H_t,P]]} + \norm{ [H, [H_\rho[H,\rho] ,P]]} + \norm{ [H, [H,[H,P]]]},
\end{align}
where the subscripts denote the partial derivatives.
\end{lem}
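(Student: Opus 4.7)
The entire lemma reduces to repeated differentiation of the single identity $\I\,\partial_t P = [H, P]$. I would first establish this identity by direct calculation: differentiating $P = \Phi\Phi^\dag$ and combining $\I\partial_t\Phi = (I-P)H\Phi$ with its Hermitian conjugate gives
\begin{equation*}
\I\,\partial_t P = (I-P)HP - PH(I-P) = HP - PH = [H, P],
\end{equation*}
so $\norm{\partial_t P} = \norm{[H,P]}$, which is \eqref{eq: estimate_der_P} with equality. It is also useful to record the sharp commutator bound $\norm{[A, P]} \le \norm{A}$ for an orthogonal projector $P$: writing $[A, P] = (I-P)AP - PA(I-P)$, the two summands map into orthogonal ranges from orthogonal domains, so the operator norm of each block is at most $\norm{A}$.

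For the higher-order bounds, the strategy is to differentiate the identity $\I\partial_t P = [H, P]$ repeatedly and eliminate every occurrence of $\dot H$ and $\dot\rho$ using the chain rule $\dot H = H_t + H_\rho \dot\rho$ and the von Neumann equation $\dot\rho = -\I[H,\rho]$, so that the result is expressed purely in $H$, $\rho$, $P$ and partial derivatives of $H$. For the second derivative this yields
\begin{equation*}
\partial_t^2 P = -\I[\dot H, P] - [H, [H, P]] = -\I[H_t, P] - [H_\rho[H,\rho], P] - [H, [H, P]],
\end{equation*}
and the triangle inequality together with the projector bound above delivers \eqref{eq: estimate_der_P_eq2}.

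For the third derivative, I would apply the triple Leibniz identity $\partial_t^2[H, P] = [\ddot H, P] + 2[\dot H, \dot P] + [H, \ddot P]$, expand $\ddot H = H_{tt} + 2(H_t)_\rho\dot\rho + H_{\rho\rho}(\dot\rho, \dot\rho) + H_\rho \ddot\rho$ via Fa\`a di Bruno, and use $\ddot\rho = -\I[\dot H, \rho] - [H, [H, \rho]]$ from the von Neumann equation. Substituting $\dot\rho = -\I[H, \rho]$ and $\dot H = H_t - \I H_\rho[H, \rho]$ everywhere and regrouping by commutator structure produces precisely the three blocks in the stated bound: six terms from $-\I[\ddot H, P]$ (one per scalar component of $\ddot H$, with the factor $2$ on the $(H_t)_\rho[H,\rho]$ term inherited from the mixed partial), two terms from $-2[\dot H, [H, P]]$ (one per component of $\dot H$, each carrying the Leibniz prefactor $2$), and three terms from $-\I[H, \ddot P]$. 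A term-by-term comparison matches the eleven contributions listed, and one last application of the triangle inequality and $\norm{[A,P]} \le \norm{A}$ closes the proof.

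The only genuine obstacle is the combinatorial bookkeeping in the third-derivative step: care is needed to place every factor of $2$ correctly (they arise either from the middle term of the triple Leibniz rule or from the mixed term $2(H_t)_\rho\dot\rho$ in $\ddot H$, not from the commutator bound on $P$) and to fully unwind all nested occurrences of $\dot\rho$ and $\ddot\rho$ so that the final expression involves only $H$, its partial derivatives, $\rho$, and $P$. No ideas beyond the fundamental identity $\I\partial_t P = [H, P]$ and the chain rule are required.
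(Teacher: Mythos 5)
Your proposal is correct and follows essentially the same route as the paper: differentiate the identity $\I\,\partial_t P=[H,P]$, expand $\dot H$ and $\ddot H$ via the chain rule while eliminating $\dot\rho$ through the von Neumann equation, and apply the triangle inequality together with the projector bound. The only (welcome) difference is that you make explicit the sharp estimate $\norm{[A,P]}\le\norm{A}$, which the paper invokes only implicitly via ``$\norm{P}\le 1$'' but which is needed to avoid spurious factors of $2$ on the terms where the outer commutator with $P$ is dropped.
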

\begin{proof} 
The first inequality is trivial.
To prepare for the differentiation of $P$, we start by computing the derivatives of $H$. For notational simplicity, we use the subscripts to denote the partial derivative and omit the explicit $(t,\rho(t))$ dependence in $H$. The first order derivative of $H$ reads 
\begin{equation}\label{eq:dotH}
\dot H : = \frac{d}{dt} H(t,\rho(t)) = H_t +  H_\rho \rho_t= H_t - \I   H_\rho [H,\rho], 
\end{equation}
and the second order derivative of $H$ is given by
\[
\begin{split}
    \ddot {H} : =&  \frac{d^2}{dt^2} H(t, \rho(t)) =\frac{d}{dt} H_t -\I \frac{d}{dt}( H_\rho [H,\rho])\\
    =& H_{tt} +   (H_{t})_\rho \rho_t 
     -\I (H_t)_\rho  [H,\rho] 
     -\I H_{\rho\rho} \rho_t [H,\rho]
     -\I H_\rho [\dot H,\rho] 
     -\I H_\rho [H,\rho_t].    
\end{split}
\]
It follows from $\I  \partial_t \rho = [H, \rho]$ that  
\begin{equation}\label{eq:ddotH}
\begin{split}
\ddot H = & H_{tt} - 2\I (H_{t})_\rho [H,\rho] 
     - H_{\rho\rho} ([H,\rho])^2 
     \\
     & - \I H_\rho [H_t,\rho] 
     -  H_\rho [H_\rho [H,\rho],\rho] 
     - H_\rho  [H,[H, \rho]]. 
\end{split}
\end{equation}
The second order derivative of $P$ becomes
\begin{equation}\label{eq:d^2P}
\begin{split}
\partial_t^2{P} &= -\I \frac{d}{dt}([H(t, \rho(t)),P(t)]) 
        = -\I [\dot{H},P] -\I [H,\partial_t{P}] \\
        &= -\I [H_t,P] - [H_\rho[H,\rho] ,P] - [H,[H,P]],
\end{split}
\end{equation}
together with the fact that $\|P\| \le 1$, we obtain  \eqref{eq: estimate_der_P_eq2}.
Similarly, the third order derivative of $P$ can be computed explicitly via
\begin{equation*}
\begin{split}
\partial_t^3{P} 
    &= - \I [\ddot{H},P] - 2\I [\dot{H}, \partial_t P] - \I [H,\partial_t^2{P}] .
\end{split}
\end{equation*}
Plugging in \eqref{eq:dotH}, \eqref{eq:ddotH} and \eqref{eq:d^2P}, one obtains
\begin{equation*}
\begin{split}
\partial^3_t P = & 
- \I [H_{tt},P]  - 2 [(H_{t})_\rho [H,\rho],P]  + \I [H_{\rho\rho} ([H,\rho])^2 ,P] 
 -  [H_\rho [H_t,\rho] ,P] \\& + \I [H_\rho [H_\rho [H,\rho],\rho],P]  + \I [H_\rho  [H,[H, \rho]],P]
 - 2 [H_t, [H,P]] \\&+ 2\I [H_\rho [H,\rho], [H,P]]  
- [H, [H_t,P]] + \I [H, [H_\rho[H,\rho] ,P]] + \I [H, [H,[H,P]]].
\end{split}
\end{equation*}
Taking the norm yields the desired result.
\end{proof}

\begin{lem}\label{lem:der_rho}
Suppose $H(t,\rho)$ is continuously differentiable in terms of $t$ and $\rho$ up to second order. The derivatives of $\rho$ satisfy
\begin{align} \label{eq: estimate_der_rho}
\norm{\partial_t \rho}  \le &  \norm{[H,\rho]}, \\
\norm{\partial_t^2 \rho}  \le & \norm{[H_t, \rho]} + \norm{[H_\rho[H,\rho], \rho]} + \norm{[H, [H,\rho]]} , 
\label{eq: estimate_der_rho_eq2}\\
\norm{\partial_t^3 \rho}  \le & \norm{ [H_{tt},\rho] } + 2 \norm{[(H_{t})_\rho [H,\rho], \rho]}  + \norm{[H_{\rho\rho} ([H,\rho])^2, \rho] } 
\nonumber 
 + \norm{ [H_\rho [H_t,\rho], \rho]  } \\ & + \norm{ [H_\rho [H_\rho [H,\rho],\rho], \rho]}  + \norm{ [H_\rho  [H,[H, \rho]], \rho]} \nonumber
 + 2\norm{ [H_t, [H,\rho]]}  + \norm{[H, [H_t,\rho]]} 
 \\ &+ 2\norm{ [H_\rho [H,\rho], [H,\rho]] } 
 + \norm{ [H, [H_\rho[H,\rho] ,\rho]]} + \norm{ [H, [H,[H,\rho]]]},
\end{align}
where the subscripts denote the partial derivatives.
\end{lem}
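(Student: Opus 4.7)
\textbf{Proof proposal for \cref{lem:der_rho}.} The plan is to mirror the derivation of \cref{lem:der_P} exactly, replacing the curve $P(t)$ in the Grassmann manifold by the density matrix $\rho(t)$. The only structural input needed is the von Neumann equation $\I\partial_t\rho=[H,\rho]$ (which plays the same role as $\I\partial_t P=[H,P]$ used in the previous lemma) together with the elementary bound $\|\rho(t)\|\le 1$, which follows from $\rho_0^2\preceq\rho_0$ and the fact that the von Neumann flow preserves the spectrum of $\rho$.

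First I would read off the $k=1$ bound immediately from $\I\partial_t\rho=[H,\rho]$. Next, I would recycle verbatim the formulas \eqref{eq:dotH} for $\dot H$ and \eqref{eq:ddotH} for $\ddot H$ from the proof of \cref{lem:der_P}, since they depend only on $H$, $\rho$, and $\partial_t\rho$, not on $P$. Then differentiating the von Neumann equation once gives
\begin{equation*}
\I\partial_t^2\rho=[\dot H,\rho]+[H,\partial_t\rho]=[H_t,\rho]-\I[H_\rho[H,\rho],\rho]-\I[H,[H,\rho]],
\end{equation*}
which after taking the operator norm yields \eqref{eq: estimate_der_rho_eq2}. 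Differentiating once more produces
\begin{equation*}
\I\partial_t^3\rho=[\ddot H,\rho]+2[\dot H,\partial_t\rho]+[H,\partial_t^2\rho],
\end{equation*}
into which I would substitute the expressions for $\dot H$, $\ddot H$, $\partial_t\rho$, and $\partial_t^2\rho$, expand the commutators, and then bound each term individually by the triangle inequality. Because each expansion is term-for-term parallel to the one carried out for $P$ (the outermost commutator is with $\rho$ instead of $P$, but the inner structure is identical), I do not expect any new cancellation or recombination to appear.

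The main obstacle, such as it is, is purely bookkeeping: one must verify that every term produced in the expansion of $[\ddot H,\rho]+2[\dot H,[H,\rho]]+[H,[\dot H,\rho]+[H,[H,\rho]]]$ matches exactly one of the eleven terms listed in the statement. There is no analytic difficulty; in particular no assumption on the rank, positivity, or invertibility of $\sigma$ enters, so the argument applies equally to pure and mixed states. I would conclude by noting that the proof is literally the proof of \cref{lem:der_P} with the symbol $P$ replaced by $\rho$ throughout the commutators, and with $\|P\|\le 1$ replaced by $\|\rho\|\le 1$; this observation could reasonably shorten the writeup to a single sentence referencing the previous proof.
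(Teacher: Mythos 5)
Your proposal is correct and is essentially identical to the paper's own proof, which simply observes that $\rho$ satisfies $\I\partial_t\rho=[H,\rho]$, the same equation as $P$, and repeats the computation of \cref{lem:der_P} with $P$ replaced by $\rho$. (A minor remark: since every term in the stated bound for $\partial_t^k\rho$ retains the outer commutator with $\rho$, the triangle inequality alone suffices and the bound $\norm{\rho}\le 1$ is not actually needed here.)
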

\begin{proof} 
The proof is similar as \cref{lem:der_P} since $\rho$ satisfies the equation $\I \partial_t \rho = [H,\rho]$, which has the same form of that for $P$. 
\end{proof}
Therefore, the local truncation errors of the PT dynamics can be bounded by terms involving commutators of $[H, P]$, $[H, \rho]$, $[H_t, \rho]$, $[H_{tt}, \rho]$, $[H_t, P]$, $[H_{tt}, P]$.

\subsection{Comparison to the Schr\"odinger dynamics} \label{sec:err_analysis_schd}

In this section, we discuss the local truncation error of the Schr\"odinger dynamics and the global errors of the PT and Schr\"odinger dynamics. The local truncation error can be summarized in the following lemma. Note that in the bound, we keep the wavefunction $\Psi$ for the terms without commutator structures, such as $\norm{H^3\Psi}$, instead of replacing it by the operator norm $\norm{H^3}$, because the latter could be significantly larger than the former.

\begin{lem} \label{lem:LTE_2nd_schd}
For the IM scheme, 
the local truncation errors of Schr\"odinger dynamics \eqref{eqn:tddft} can be bounded as  
\begin{equation} \label{eqn:err_schd}
\begin{split}
\norm{e_k(\Psi)} \le & C\big( \norm{H^3 \Psi} +  \norm{H H_t \Psi} + 2\norm{H_t H \Psi} + \norm{H_{tt} \Psi}  \\
& + \norm{H H_\rho [H,\rho]} + 2\norm{H_\rho [H,\rho] H} + 2\norm{(H_{t})_\rho [H,\rho]}
+ \norm{H_{\rho\rho} ([H,\rho])^2} \\& + \norm{H_\rho [H_t,\rho]}  
+ \norm{H_\rho [H_\rho [H,\rho],\rho]} + \norm{H_\rho  [H,[H, \rho]]}
  \big),
\end{split}
\end{equation}
for some constant $C$ that does not depend on $t_k$, $h$.
\end{lem}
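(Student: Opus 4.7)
The plan is to reduce the local truncation error bound to a bound on $\|\partial_t^3 \Psi\|$, using the standard fact that for a second-order Runge-Kutta method like the implicit midpoint rule, the local truncation error on a smooth trajectory satisfies $\|e_k(\Psi)\| \le C\,h^3 \max_{t\in[t_{k-1},t_k]}\|\partial_t^3 \Psi(t)\|$, analogous to the inequality \eqref{eqn:lte_ineq} already used for the PT analysis. The entire proof then consists of computing $\partial_t^3 \Psi$ explicitly from \eqref{eqn:tddft} and taking norms.

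First I would differentiate $\I \partial_t \Psi = H\Psi$ successively. A direct calculation gives $\partial_t \Psi = -\I H \Psi$, $\partial_t^2 \Psi = -\I \dot{H}\Psi - H^2 \Psi$, and
\begin{equation*}
\partial_t^3 \Psi = -\I \ddot{H}\Psi - 2\dot{H} H \Psi - H \dot{H}\Psi + \I H^3 \Psi.
\end{equation*}
Next I would substitute the expressions for $\dot{H}$ and $\ddot{H}$ that were already derived during the proof of \cref{lem:der_P}, namely \eqref{eq:dotH} and \eqref{eq:ddotH}. This exposes two qualitatively different kinds of terms: (i) terms of the form $H_t H \Psi$, $H H_t \Psi$, $H_{tt}\Psi$, $H^3 \Psi$, which involve only partial time derivatives of $H$ acting on $\Psi$ and carry no commutator structure; and (ii) terms arising from the $H_\rho \rho_t$-type contributions, which after invoking $\I\partial_t \rho = [H,\rho]$ all carry an inner commutator $[H,\rho]$, $[H_t,\rho]$, or nested analogues.

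Then I would take $\|\cdot\|$ of each resulting summand via the triangle inequality. For class (i) I keep the $\Psi$ inside the norm (as emphasized in the paragraph preceding the lemma), since $\|H^3 \Psi\|$ and $\|H_t H\Psi\|$ can be much smaller than the operator norms $\|H^3\|$, $\|H_t H\|$ when $\Psi$ is supported on low-energy modes. For class (ii), since each factor already contains a commutator, $\|\Psi\|=1$ has already been absorbed and I take straight operator norms, e.g.\ $\|H H_\rho[H,\rho]\|$ rather than $\|H H_\rho[H,\rho]\Psi\|$. Matching the resulting eleven summands against \eqref{eqn:err_schd} (with the multiplicities $2\|H_t H \Psi\|$, $2\|H_\rho[H,\rho] H\|$, $2\|(H_t)_\rho [H,\rho]\|$ coming from the combinatorial factors in the expansion of $\partial_t^3 \Psi$) closes the proof.

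The calculation is essentially bookkeeping; there is no conceptual obstacle, and the potential pitfall is simply to track the non-commutativity of $H$ with $\dot H$ carefully (so that $\dot H H$ and $H \dot H$ are kept distinct and yield the correct coefficients $2$ and $1$ respectively) and to decide consistently, term by term, whether to retain $\Psi$ inside the norm or to bound it out. The contrast with the PT bound in \cref{lem:der_P,lem:der_rho} is exactly the appearance of the class-(i) terms $\|H^3\Psi\|$, $\|HH_t\Psi\|$, $\|H_t H\Psi\|$, $\|H_{tt}\Psi\|$ that lack any commutator structure, which is the point of the comparison drawn in this section.
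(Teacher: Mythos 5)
Your proposal is correct and follows essentially the same route as the paper's proof: reduce $e_k(\Psi)$ to $\|\partial_t^3\Psi\|$ via the second-order local truncation error bound, compute $\partial_t^3\Psi = \I H^3\Psi - H\dot H\Psi - 2\dot H H\Psi - \I\ddot H\Psi$, substitute the expressions \eqref{eq:dotH} and \eqref{eq:ddotH} for $\dot H$ and $\ddot H$, and take norms term by term, keeping $\Psi$ inside the norm only for the non-commutator terms. The multiplicities and the split into commutator versus non-commutator terms match the paper exactly.
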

\begin{proof}
It suffices to calculate the derivatives of $\Psi$. 
The second order derivative is computed as
\begin{equation*}
\partial_t^2 \Psi =  -\I H \partial_t \Psi -\I \dot H \Psi = - H^2 \Psi -\I \dot H \Psi 
\end{equation*}
and the third order derivative can be computed as 
\begin{align*}
\partial^3_t \Psi  = &  -\I  H \ddot \Psi -2\I  \dot H \dot \Psi -\I \ddot H \Psi \\ 
                        = & \I H^3 \Psi   -  H \dot H \Psi 
                            - 2  \dot H H \Psi -\I \ddot H \Psi \\
                        = & \I H^3 \Psi   
                           -  H H_t \Psi 
                           + \I  H H_\rho [H,\rho] \Psi 
                            - 2  H_t H \Psi   \\&
                            + 2\I  H_\rho [H,\rho] H \Psi  
                           -\I H_{tt} \Psi
                           + 2 (H_{t})_\rho [H,\rho]  \Psi
                           +\I H_{\rho\rho} ([H,\rho])^2  \Psi \\&
                           - H_\rho [H_t,\rho] \Psi
                           +\I H_\rho [H_\rho [H,\rho],\rho]  \Psi
                           +\I H_\rho  [H,[H, \rho]] \Psi.
\end{align*}
Taking the norm and applying \eqref{eqn:lte_ineq}, we obtain the desired result.

\end{proof}

\cref{lem:der_P}, \cref{lem:der_rho} and \cref{lem:LTE_2nd_schd} give the local truncation error errors of both PT and Schr\"odinger dynamics. Following the standard stability analysis \cite{LeVeque2007}, we obtain the global error bounds.

\begin{thm}[Global error] \label{thm:global_err}
For the IM schemes of \eqref{eqn:tddft} and \eqref{eqn:pt_mix} up to the time $t_n = T$, there exists some constant $C$ depending on $T$ and $\norm{H}$ such that
\begin{enumerate}
        \item the errors for the PT dynamics \eqref{eqn:pt_mix} satisfy  
\begin{equation}
\norm{\Phi(t_n) - \Phi_n} + \norm{\sigma(t_n) - \sigma_n} \leq C f_1(H,\rho, P) h^2, \\
\end{equation}
where $f_1$ is a function of $H, \rho, P$ that is a linear combination of products of nested commutators up to three layers of the form
\begin{equation}
 \norm{[A_4,  A_3[ A_2, A_1 A_0]]}
\end{equation}
with $A_0$ being one of the following
\begin{equation}
[H, P], \quad [H_t, P], \quad [H_{tt}, P], \quad [H, \rho], \quad [H_t, \rho], \quad [H_{tt}, \rho]
\end{equation}
and $A_i$ ($i = 1, \cdots, 4$) being the identity matrix $I$, functions of $H, \rho, P$ or derivatives of $H$.
\item the error for the Schr\"odinger dynamics \eqref{eqn:tddft} satisfies
\begin{equation}
 \norm{\Psi(t_n) - \Psi_n}  \leq C \left( f_2(H,\rho, P) + \norm{H^3 \Psi} +  \norm{H H_t \Psi} + 2\norm{H_t H \Psi} + \norm{H_{tt} \Psi}\right) h^2,
\end{equation}
where $f_2$ has the same form as $f_1$. 
\end{enumerate}

\end{thm}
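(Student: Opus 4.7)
The plan is to follow the classical consistency-plus-stability route: combine a pointwise bound on the local truncation error, obtained from \cref{lem:der_P,lem:der_rho} and \cref{lem:LTE_2nd_schd}, with a discrete stability estimate for the implicit midpoint propagator, and then invoke discrete Gr\"onwall to telescope the one-step errors into the global bound claimed, being careful to preserve the commutator structure throughout.

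First I would control the local truncation error. For the PT system, \cref{eqn:lte_ineq} reduces the task to bounding $\norm{\partial_t^3 \Phi}$ and $\norm{\partial_t^3 \sigma}$. The estimates preceding \cref{lem:der_P} already express $\partial_t^k \Phi$ as products involving $\partial_t^\ell P$ with $\ell\le k$, and the explicit formula for $\partial_t^3\sigma$ given in \cref{sec:err_analysis_pt} expresses it in terms of $\partial_t^\ell \Phi$ and $\partial_t^\ell\rho$ with $\ell\le k$. Substituting the commutator bounds from \cref{lem:der_P,lem:der_rho} then produces a bound of the form $C\, f_1(H,\rho,P)\,h^3$ for $\norm{e_k(\Phi)}+\norm{e_k(\sigma)}$, where each term in $f_1$ is a product of nested commutators of the required shape $\norm{[A_4,A_3[A_2,A_1A_0]]}$ with $A_0\in\{[H,P],[H_t,P],[H_{tt},P],[H,\rho],[H_t,\rho],[H_{tt},\rho]\}$; the outer factors $A_i$ arise from multiplications by $\Phi$, $\rho$, $P$, or derivatives of $H$ and are uniformly bounded because $\norm{\Phi}=1$, $\norm{P}\le1$, $\norm{\rho}\le1$, and $H$ is assumed sufficiently smooth.

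For stability, I would use \cref{prop:ortho} to confine the numerical iterates to the bounded set $\{\Phi^\dagger\Phi=I_N,\ \Tr[\sigma]=N_e\}$, on which the right-hand side of \cref{eqn:pt_mix} is Lipschitz in $(\Phi,\sigma)$ with constant $L$ depending on $\norm{H}$ and its derivatives. A standard argument for the implicit midpoint rule then gives the one-step stability estimate $\norm{\delta_{n+1}}\le (1+Ch)\norm{\delta_n}+\norm{e_{n+1}}$ for the combined error $\delta_n=(\Phi(t_n)-\Phi_n,\sigma(t_n)-\sigma_n)$, provided $h$ is small enough that the implicit step is well-posed (a contraction argument on \cref{eqn:one_step} ensures this). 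Discrete Gr\"onwall summed to $t_n=T$ converts the $O(h^3)$ per-step error into the $O(h^2)$ global bound with prefactor $f_1$, proving part (1). Part (2) is analogous: \cref{lem:LTE_2nd_schd} already provides the local truncation bound for $\Psi$, whose expansion of $\partial_t^3\Psi$ splits into commutator-type terms forming $f_2$ plus genuinely non-commutator terms $H^3\Psi$, $HH_t\Psi$, $H_t H\Psi$, $H_{tt}\Psi$; the same stability argument, applied to the linear (in $\Psi$) Schr\"odinger IM map, gives the stated bound.

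The main obstacle is bookkeeping: one must check that every differentiation step that converts $\partial_t^k P$, $\partial_t^k\rho$ into $\partial_t^k\Phi$, $\partial_t^k\sigma$, and then assembles these into the local truncation error, only multiplies commutators by bounded outer factors rather than breaking the commutator structure or introducing bare powers of $H$. The calculations in \cref{lem:der_P,lem:der_rho} show that this is the case for the PT equations precisely because $\I\partial_t\rho=[H,\rho]$ and $\I\partial_t P=[H,P]$ both have commutator form, whereas the analogous expansion for $\Psi$ in \cref{lem:LTE_2nd_schd} manifestly does not---this contrast is exactly what the theorem codifies. A secondary technical point is uniqueness/solvability of the implicit step, which is needed to invoke \cref{prop:ortho} and to give meaning to the one-step error; this follows from a Banach fixed-point argument on \cref{eqn:one_step} for $h\norm{H}$ sufficiently small, with the Lipschitz constant controlled by the smoothness of $H(t,\rho)$ in its arguments.
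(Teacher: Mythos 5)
Your proposal is correct and follows essentially the same route as the paper: the paper derives the local truncation errors from \cref{lem:der_P,lem:der_rho,lem:LTE_2nd_schd} and then simply invokes ``the standard stability analysis'' (consistency plus implicit-midpoint stability plus discrete Gr\"onwall, yielding a constant depending on $T$ and $\norm{H}$) to obtain the global bound. Your additional details---confining the iterates via \cref{prop:ortho}, the Lipschitz/contraction argument for well-posedness of the implicit step, and the bookkeeping check that the commutator structure survives assembly into $f_1$---are exactly the steps the paper leaves implicit.
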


\cref{thm:global_err} shows that the error bound of PT dynamics exhibits commutator scaling while that of the Schr\"odinger equation does not. 
We remark that the worst-case dependence of the constant $C$ on the norm of $H$ can be very pessimistic, which is due to the standard stability analysis through the Gr\"onwall type estimates. However, the Schr\"odinger equation inherits a Hamiltonian structure and, together with the fact that IM is a symplectic scheme, this preconstant $C$ may be dramatically improved such that it depends linearly on $T$ and is even possibly independent of $\|H\|$ \cite{HairerLubichWanner2006}. 
In order to formally employ the symplectic properties, however, the PT-IM scheme needs to be slightly modified.
This has been demonstrated in \cite{AnLin2020} for pure states. 
Numerical results in \cite{AnLin2020} also demonstrate that the performance of the schemes with and without the modification are almost the same, so the modification may only be of theoretical interest.
For simplicity, we do not detail such modification here.

\subsection{Near adiabatic regime} \label{sec:near_adia}

In the near adiabatic regime, we can use commutator structure to demonstrate provable advantage of the PT dynamics over the Schr\"odinger dynamics. 
Consider the singular perturbed Schr\"odinger equation:
\begin{equation}\label{eqn:schd_singular}
i\eps \partial_t \Psi^\eps(t) = H(t, \rho^\eps(t)) \Psi^\eps(t), \quad \eps \ll 1.
\end{equation}
Here $\rho^\eps(0)$ is a pure state, and $\Psi^\eps(0)$ consists of the eigenfunctions of $H(0,\rho^\eps(0))$ corresponding to the algebraically lowest $N$ eigenvalues.

Let $\rho^\eps = P^\eps = \Psi^\eps \Psi^\eps\,^\dagger$. Then the PT dynamics become
\begin{equation}
  \I \eps \partial_t \Phi^\eps(t) = H(t,\rho^\eps(t))\Phi^\eps(t) -
  \Phi^\eps(t)(\Phi^\eps\,^{\dag}(t)H(t,\rho^\eps(t)) \Phi^\eps(t)), \quad \Phi^\eps(0)=\Psi^\eps(0).
\end{equation}


In the linear case ($H(t,\rho(t))=H(t)$ is independent of $\rho$), if the gap condition is satisfied, i.e. there exists a positive gap between the $N$-th and $(N+1)$-th smallest eigenvalues of $H(t)$ for all $t\in[0,T]$, 
The adiabatic theorem (see for example, \cite{Teufel2003,HagedornJoye2002,Joye1995,Joye2007}) for the Schr\"odinger dynamics \eqref{eqn:schd_singular} states that
\begin{equation} \label{eqn:adia_thm_schd}
\Psi^\eps(t) = \Psi_a(t) + \Or(\eps),
\end{equation}
where the columns of $\Psi_a(t)$ are the eigenvectors of the Hamiltonian, namely, there exists a time-dependent diagonal matrix $\Lambda(t)$ whose diagonal entries are eigenvalues of the Hamiltonian such that 
\[
H(t) \Psi_a(t) = \Psi_a(t) \Lambda(t). 
\]
The adiabatic theorem can also be generalized to certain linear systems without a gap condition \cite{AvronElgart1999,Teufel2001}, and for some weakly nonlinear systems~\cite{Fermanian-KammererJoye2020,GangGrech2017}. A detailed discussion of the technical conditions for the adiabatic approximation is beyond the scope of this paper. However, when such \textit{a priori} estimate is available, we can evaluate the commutator as
\begin{equation} \label{eqn:comm_eps}
[H, \rho^\eps] = H \Psi_a \Psi_a^\dagger - \Psi_a \Psi_a^\dagger H + \Or(\eps) = \Psi_a \Lambda \Psi_a^\dagger - \Psi_a \Lambda \Psi_a^\dagger  + \Or(\eps) =\Or(\eps).
\end{equation}
We now examine the commutator terms in \cref{lem:der_rho}. Note that in the singular perturbed regime, one should replace the $H$ in \cref{lem:der_rho} by $H/\eps$, and hence the leading order terms in $\eps$ are
\begin{align*}
 & \eps^{-3}\norm{[H_{\rho^\eps\rho^\eps} ([H,\rho^\eps])^2, \rho^\eps] } 
 + \eps^{-3} \norm{ [H_{\rho^\eps} [H_{\rho^\eps} [H,\rho^\eps],\rho^\eps], \rho^\eps]}  
 + \eps^{-3} \norm{ [H_{\rho^\eps}  [H,[H, \rho^\eps]], \rho^\eps]} 
 \\
 & + \eps^{-3} \norm{ [H_{\rho^\eps} [H,\rho^\eps], [H,\rho^\eps]] } 
 + \eps^{-3} \norm{ [H, [H_{\rho^\eps}[H,\rho^\eps] ,\rho^\eps]]} 
 + \eps^{-3} \norm{ [H, [H,[H,\rho^\eps]]]}
 = \Or(\eps^{-2}),
\end{align*}
thanks to \eqref{eqn:comm_eps}. 
However, by replacing $H$ in \cref{lem:LTE_2nd_schd} by $H/\eps$, we obtain $\norm{\partial_t^3 \Psi}  = \Or(\eps^{-3})$ for the Schr\"odinger dynamics. Finally applying \cref{thm:global_err}, we find that the numerical schemes for the PT dynamics can gain an order of magnitude in terms of the accuracy in $\eps$, which recovers the result in \cite{AnLin2020} for the linear case, and generalizes the result to the nonlinear case (provided that adiabatic theorems can be established).

\section{Numerical Results} \label{sec:num_results}

In this section, we provide the numerical results of the parallel transport dynamics. Extensive numerical results and implementations on real chemical systems have been demonstrated for the PT dynamics with pure initial states~\cite{AnLin2020,JiaAnWangEtAl2018,JiaLin2019,JiaWangLin2019}. Hence, we focus on the case of a mixed initial state in this section.
In numerical examples, the relative numerical errors are computed by 
\begin{equation*}
\sup_{0 \leq k \leq n} 
\frac{\|X_k - X(t_k) \|}{\|  X(t_k) \|} ,
\end{equation*}
where 
$k = 0, 1, \cdots, n$ and $t_n$ is the final time, and 
$X$ is the reference values of $\Phi$, $\sigma$ or $\Psi$ with $X_n$ represents the numerical results of the quantity $X$ at the time $t_n$.

Our model system is defined by a periodic potential field given by a one-dimensional lattice structure with Hamiltonian 
\begin{equation} \label{eq:W}
H(t) = -\frac{1}{2}\Delta + V(x) + W(x,t).
\end{equation}
Here $V(x) = \cos(x)$ is a static potential. The external time-dependent potential with frequency $\omega$ is
\begin{equation} \label{eqn:Wt}
W(x,t)  = 10 \sin\left(\frac{x}{L}\right) \sin(\omega t), 
\end{equation}
and $L$ denotes the number of unit cells. The length of the lattice (the computational domain) is $2\pi L$. Fig. \ref{fig:v_w} shows a typical plot for the two potentials over the lattice cells.
The parameters in the system are chosen as $L = 4$, $\beta = 1.453$, $\omega = 16\pi$ and the chemical potential $\mu = 3.299$. The initial occupation number according to the Fermi-Dirac distribution is in \cref{fig:initial_cond}. Each unit cell is discretized via $64$ equidistant grid points, and hence the total number of grid points is $N_g=64L$.  
%
%
%

We first verify the \cref{prop:ortho} numerically by simulating the PT dynamics to $T_\text{final} = 4$ using the PT-IM scheme with a step size $h = 0.01$. We set $N_e=20$, and $N=64$. 
The norm of $\Phi_{n+1}^\dagger \Phi_{n+1}$ and values of $\Tr{\sigma_{n}}$ and $\Tr{\sigma^2_{n}}$ are plotted for the simulation time in \cref{fig:verf_prop}. It can be seen that the values of all three quantities are constant throughout the simulation, which agrees with \cref{prop:ortho}. In comparison, we also plot the higher order trace $\Tr{\sigma_n^3}$, which is not a conserved quantity. Nonetheless, the fluctuation of $\Tr{\sigma_n^3}$ is still very small and on the order of $10^{-6}$.

\begin{figure}[htbp]
\centering
\subfloat[the potentials]{
\includegraphics[scale=0.4]{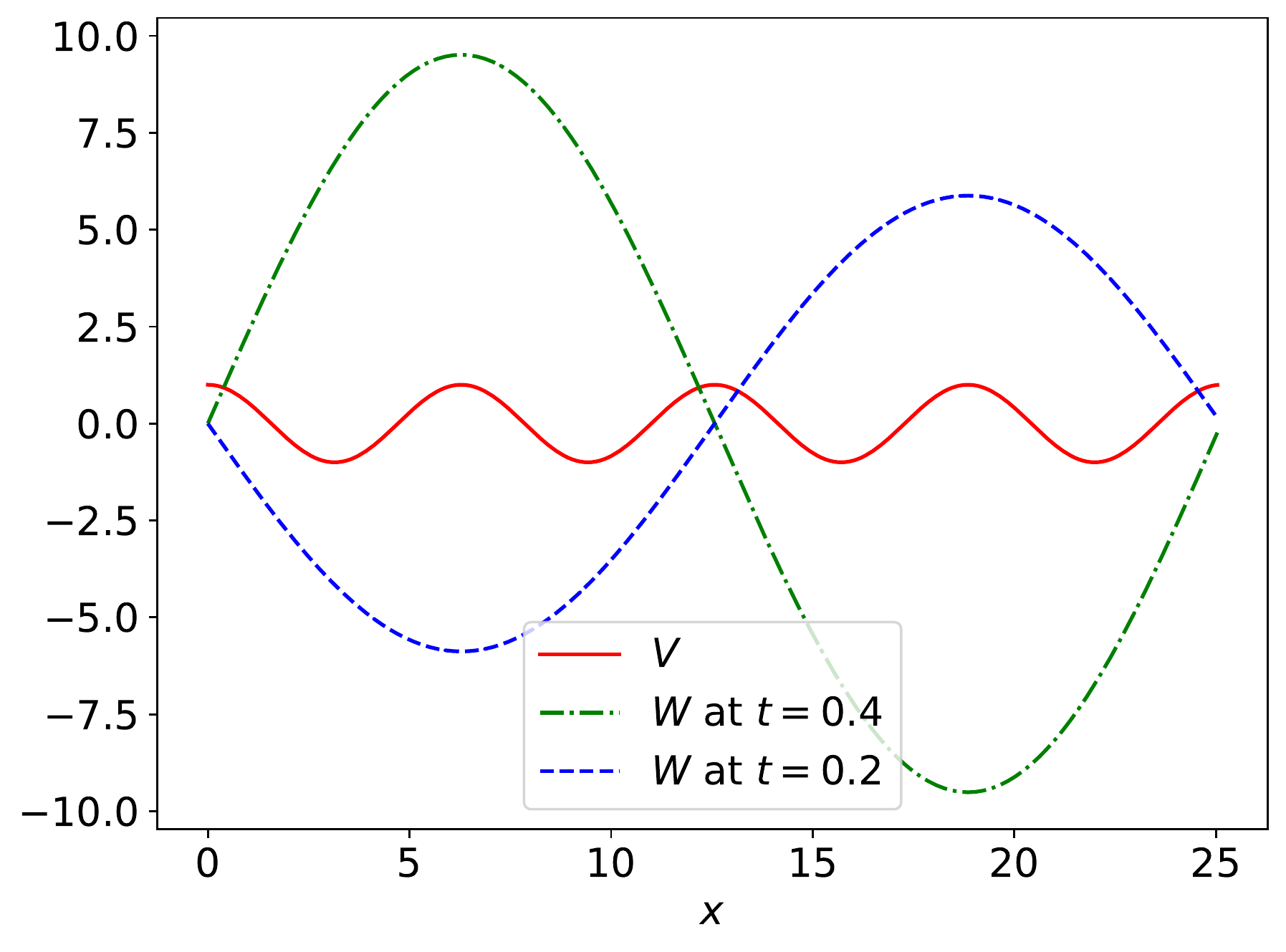}  \label{fig:v_w}}
\subfloat[the initial occupation]{
\includegraphics[scale=0.4]{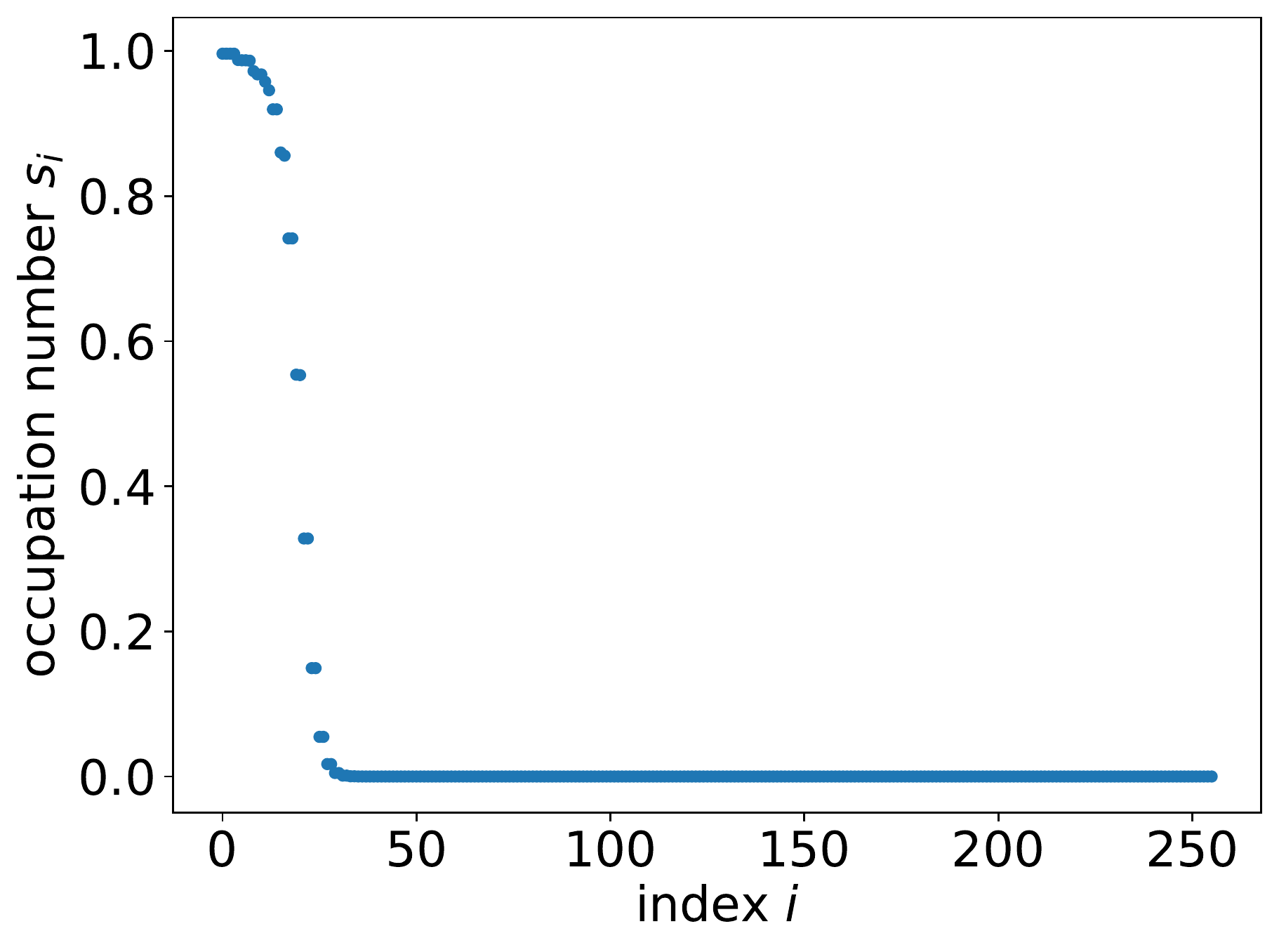}
\label{fig:initial_cond}
}

\caption{Left panel: The potentials $V(x)$ (red solid) and $W(t,x)$ at time $t = 0.2$ (blue dashed) and $t = 0.4$ (green dotted), respectively, where $W$ is of time period $1/8$. Right panel: The initial occupation of the Fermi-Dirac statistics. $L = 4$, $\beta = 1.453$, and the chemical potential is chosen such that the initial number of occupation $N_e = \Tr(\rho(0)) = 20$.}
\end{figure}

\begin{figure}[H]
\centering
\subfloat[$\norm{\Phi^\dagger \Phi - I}$]{
\includegraphics[scale=0.33]{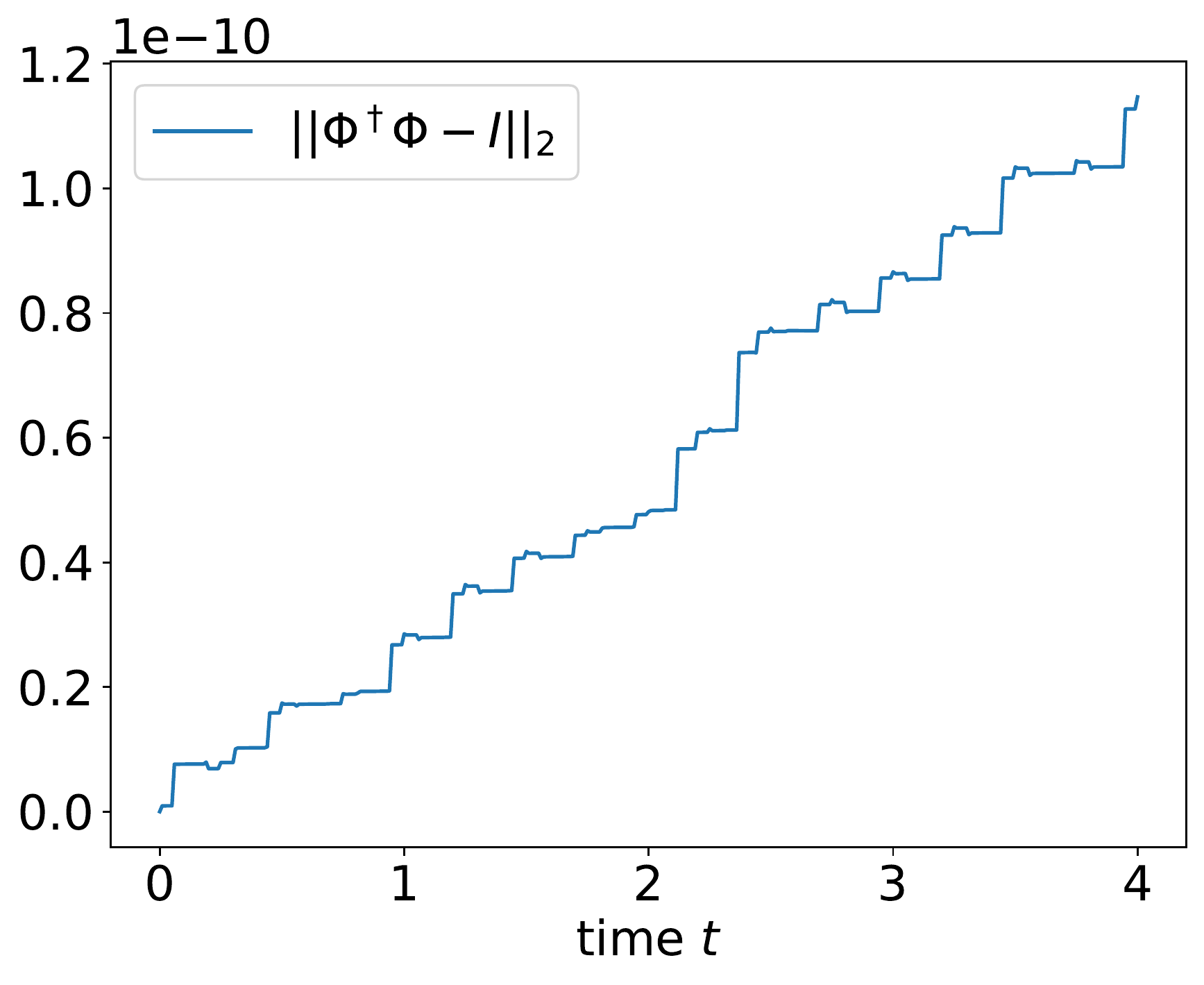}}
\subfloat[$\Tr{\sigma}$]{
\includegraphics[scale=0.33]{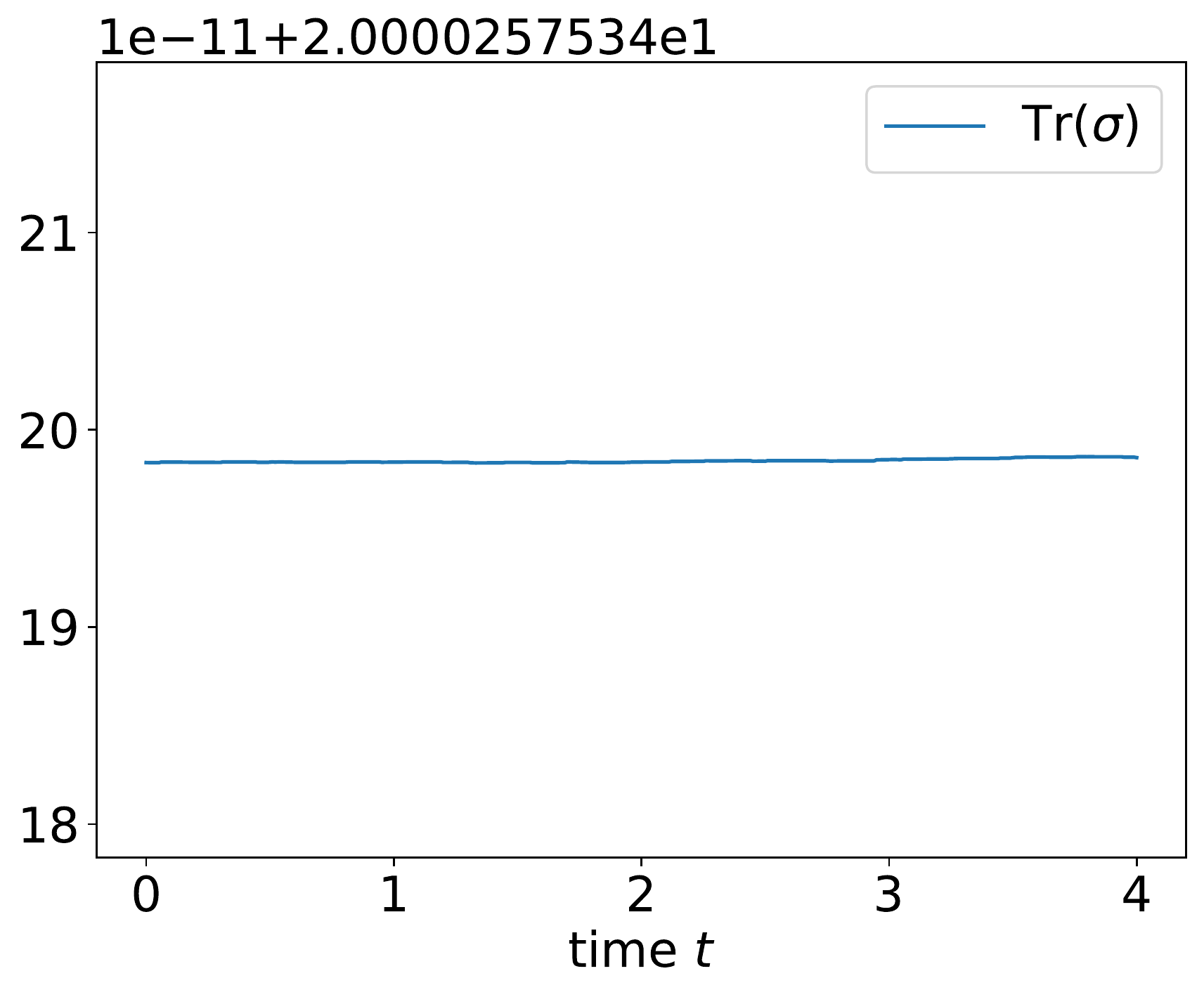}}
\\
\subfloat[$\Tr{\sigma^2}$]{
\includegraphics[scale=0.33]{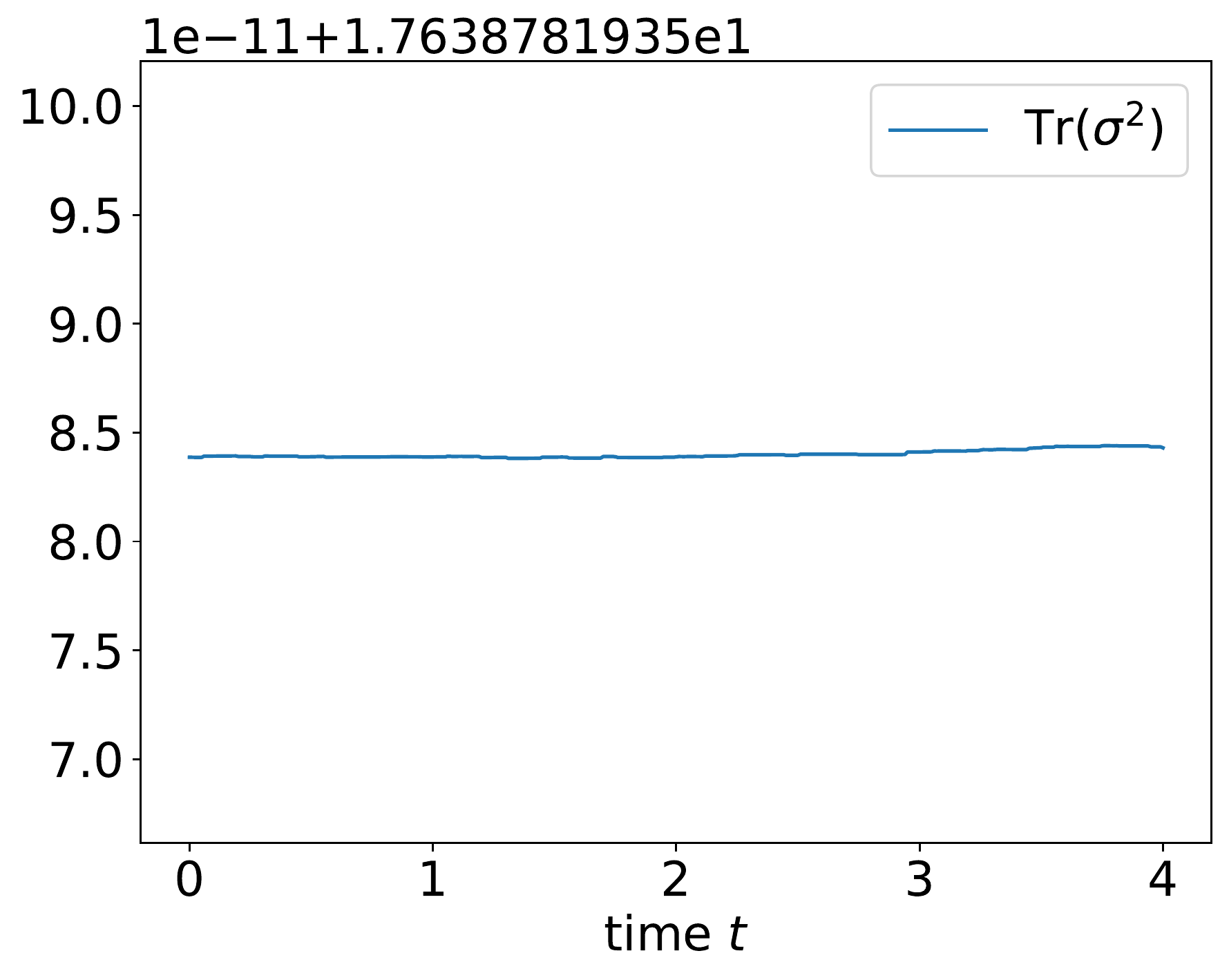}}
\subfloat[$\Tr{\sigma^3}$]{
\includegraphics[scale=0.33]{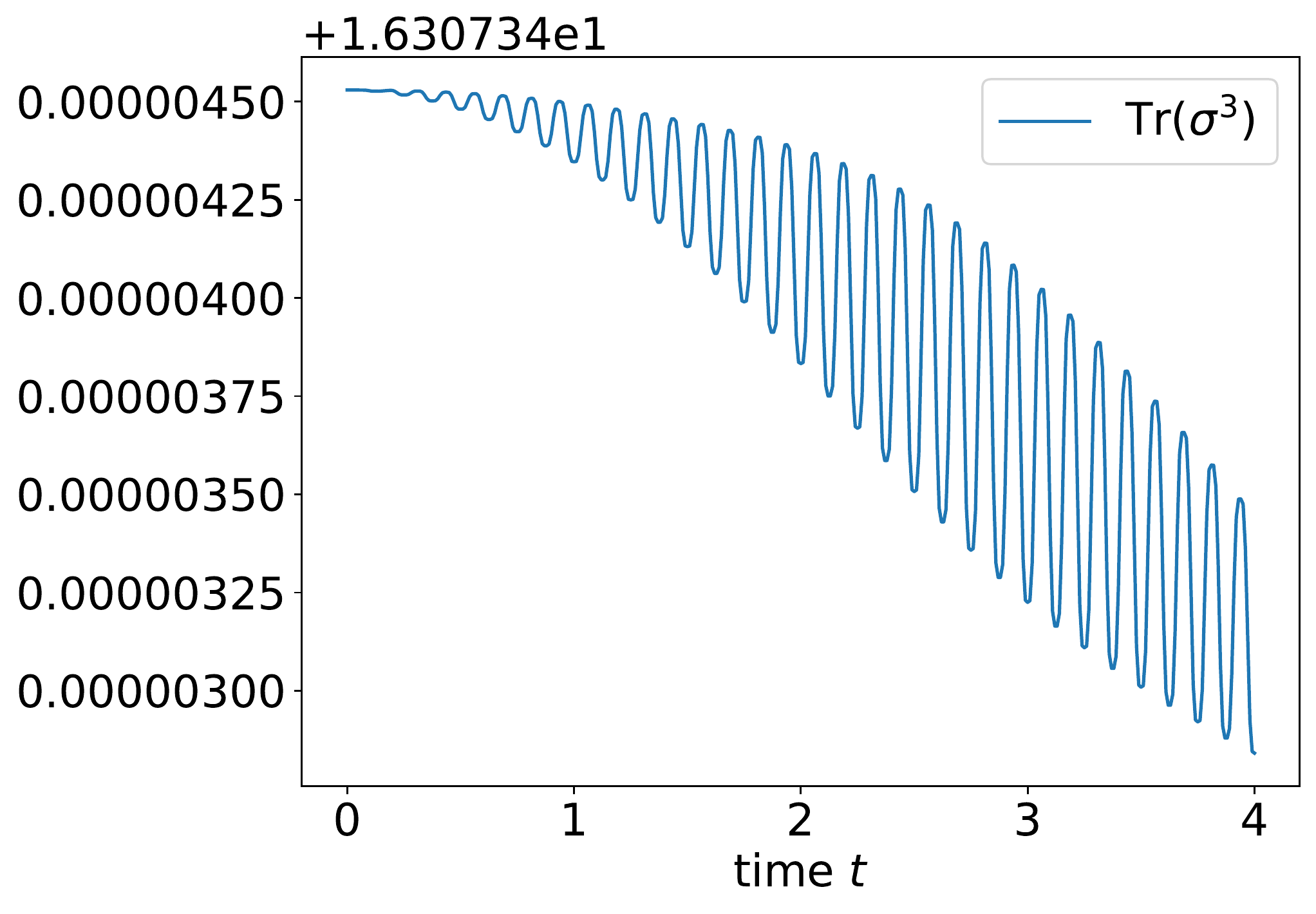}}
\caption{
Numerical verification (time step $h = 0.01$) of the orthogonality of $\Phi$ and the trace preservation of $\sigma$ and $\sigma^2$, as shown in \cref{prop:ortho}. On the other hand, the trace of higher powers of $\sigma$ (e.g. $\sigma^3$) may not be preserved in the PT-IM scheme.}
\label{fig:verf_prop}
\end{figure}

Next, we compare the numerical errors in simulating the Schr\"odinger dynamics (SD) and PT dynamics. Both dynamics are simulated using IM schemes to $T_\text{final} = 1$. We set $\mu = 26.893$ (corresponding to $N_e = 60$) and $N = 80$.
In order to verify the convergence rate numerically, we set the time steps to be $0.05$, $0.02$, $0.01$, $0.005$, $0.002$, $0.001$. The reference solution is computed using a fine time step of $2\times 10^{-5}$. \cref{fig:error_dt} shows that both SD-IM and PT-IM are second order methods, but the preconstant of PT-IM is much smaller. 
The accuracy of the PT dynamics can also be shown in terms of physical observables, e.g. the dipole moment:
\[
\langle x(t) \rangle := \Tr{(x \rho(t))}.
\]
\cref{fig:dip} compares the dipole moment computed in three different scenarios: PT-IM with $h = 0.02$, SD-IM with $h = 0.02$, and SD-IM using a very small time step $h = 0.0001$. We find that the difference between the time-dependent dipole moment obtained from PT-IM with a large time step $h = 0.02$ is almost the same as that from the reference solution. However, SD-IM with the same time step size is only accurate for a short periodic of time, and its accuracy significantly deteriorates as $t$ increases.

\begin{figure}[H]
\centering
\subfloat[Relative Errors]{
\includegraphics[scale=0.35]{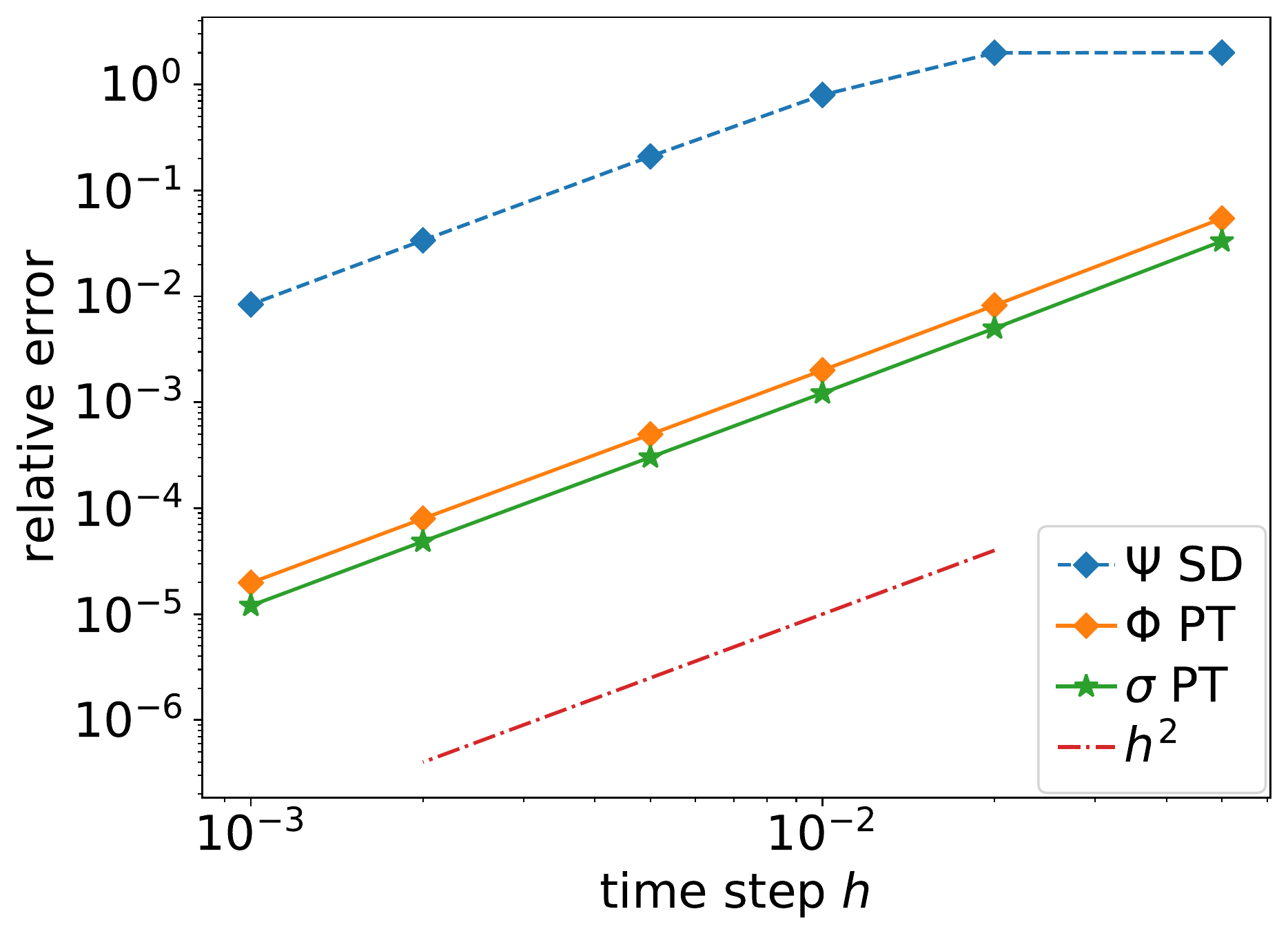} \label{fig:error_dt}}
\subfloat[Dipole moment]{
\includegraphics[scale=0.35]{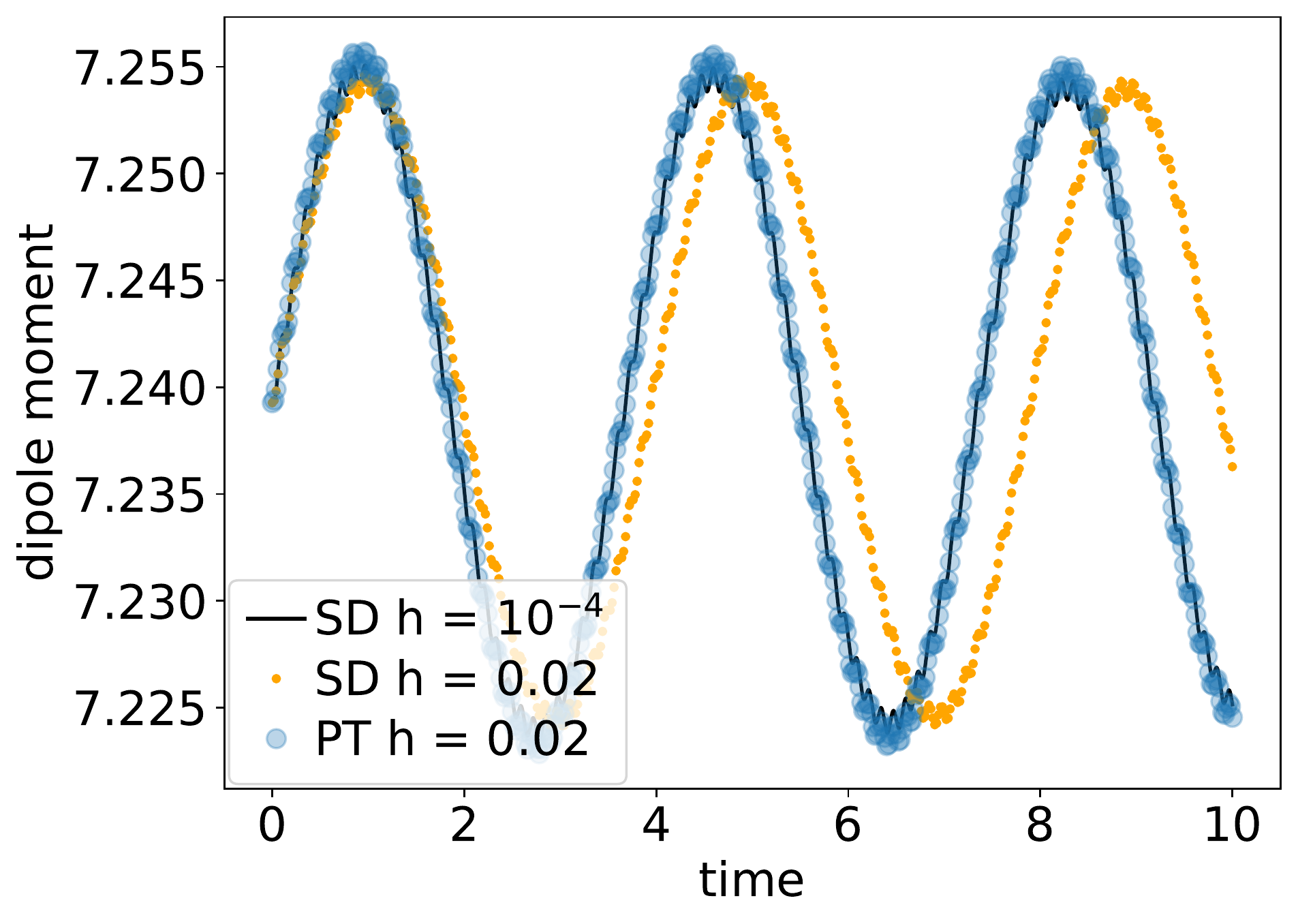} \label{fig:dip}}
\caption{Left Panel: Log-log plot of the relative errors of $\Phi$, $\sigma$, $\Psi$ and the density matrix $\rho$ computed via both PT and Schr\"odinger dynamics (SD). Right Panel: Evolution of the dipole moment for PT-IM with $h = 0.02$, SD-IM with $h=0.02$, and SD-IM with $h = 0.0001$ (reference solution).}
\end{figure}

In order to demonstrate that the commutator scaling in \cref{thm:global_err}, we now vary the number of electrons $N_e$, and compare the results of PT-IM and SD-IM. 
The chemical potential $\mu$ is set to $3.299$, $7.028$, $12.291$, $18.951$, $26.893$, and the corresponding $N_e$ are $10$, $20$, $30$, $40$, $50$, $60$, respectively. 
We also set $N = N_e + 20, h = 0.01$, and $T_\text{final} = 1$. The reference solution is computed using a very small step size $h = 2\times 10^{-5}$.
 
We plot the relative errors of both the PT and the Schr\"odinger dynamics in comparison with our theoretical bounds. 
It can be seen in \cref{fig:ref_err_ne} that as $N_e$ increases, the relative error of the wavefunction in the Schr\"odinger dynamics grows much faster than that in the PT dynamics.
\cref{fig:ref_err_ne} also plots the terms in the error bounds with or without the commutator structures, respectively.
We find that the term without commutator structures can be much larger in magnitude, and the qualitative trend of the growth of the error bound with respect to $N_e$ matches that of the error from the numerical simulation.

\begin{figure}[H] 
\centering
\subfloat[errors in wavefunctions]{
\includegraphics[scale=0.35]{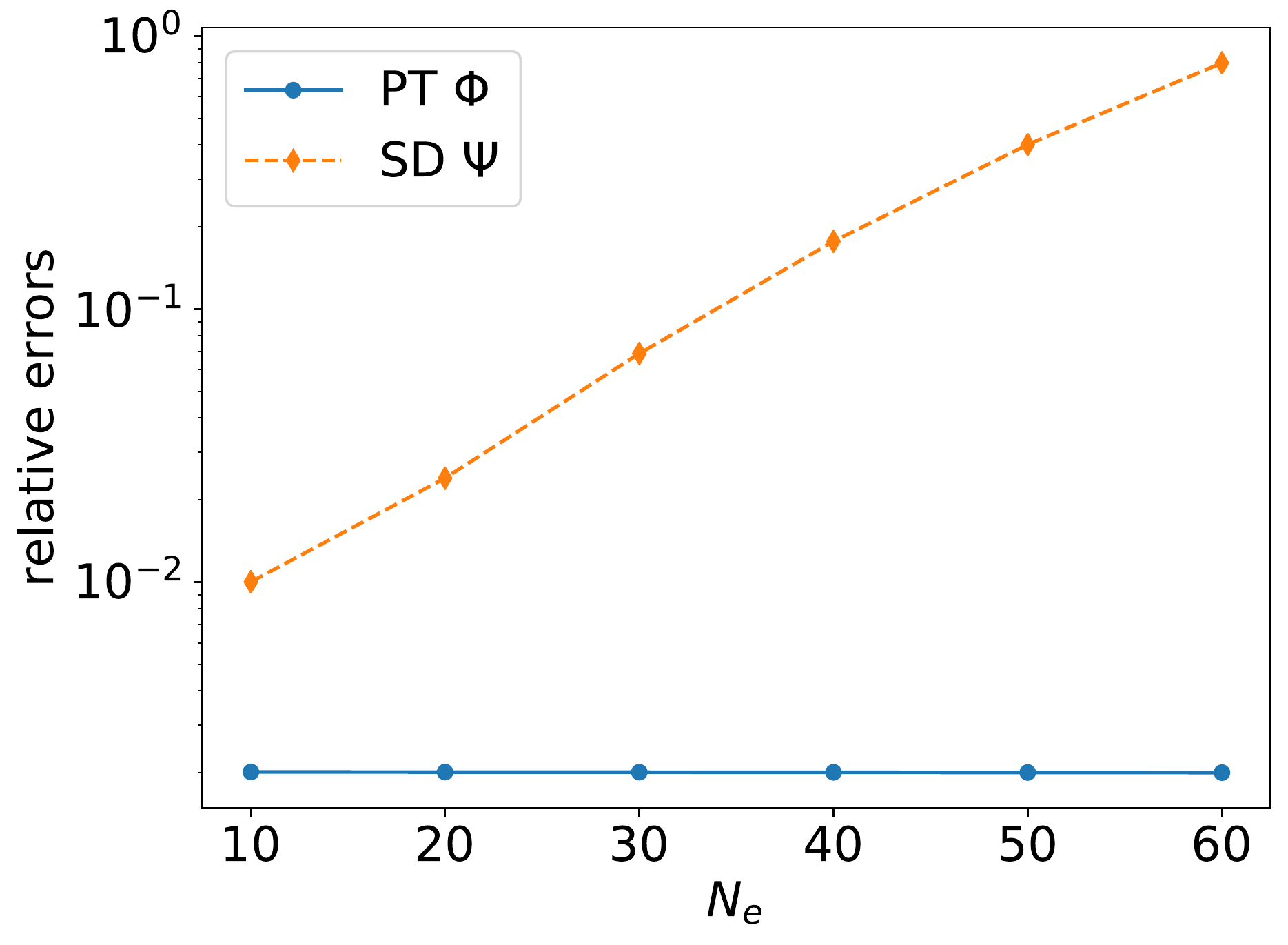} \label{fig:rel_err_u_ne}}
\subfloat[Commutator bounds and $\norm{H^3\Psi}h^2$]{
\includegraphics[scale=0.35]{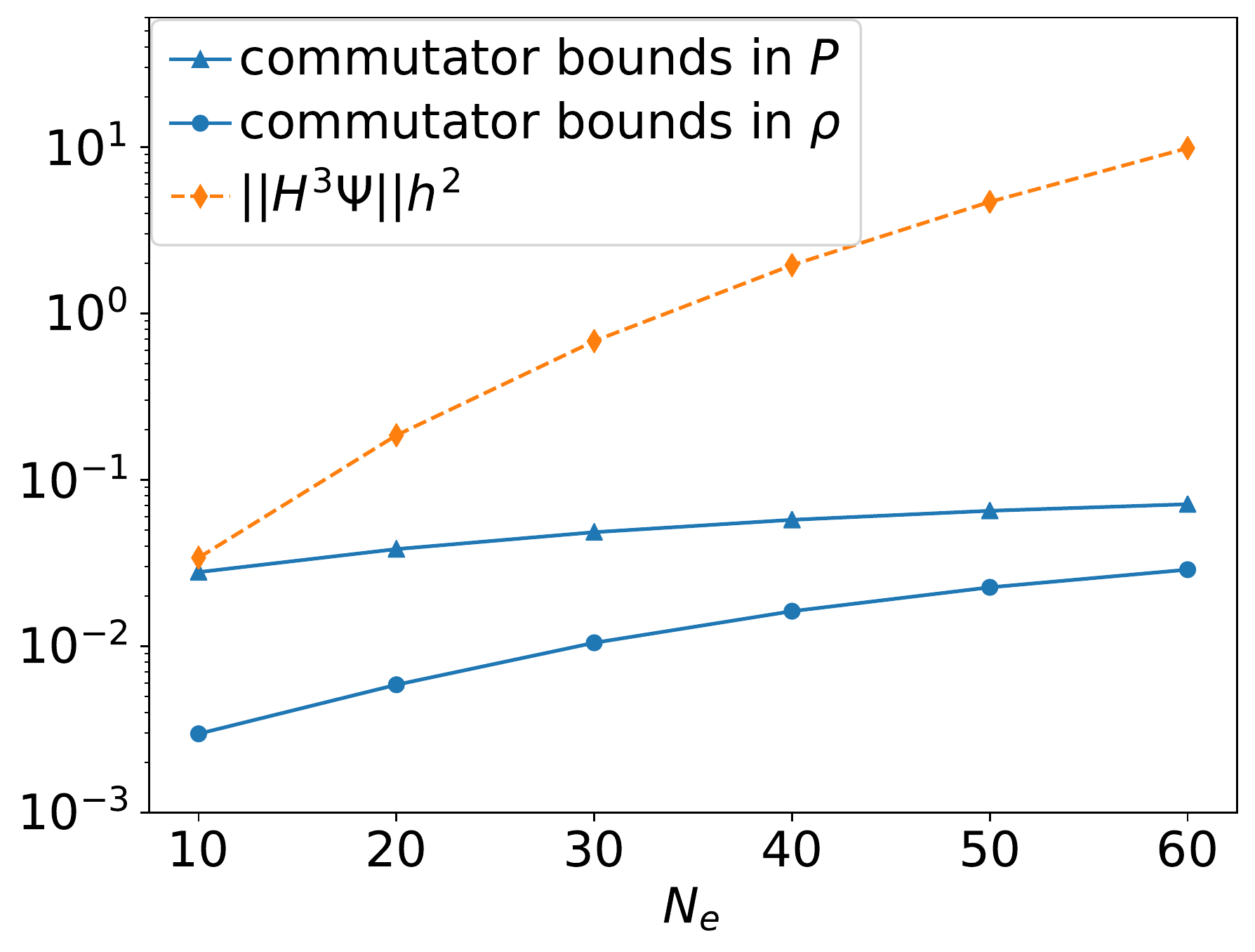} \label{fig:comm_vs_hpsi}}
\caption{Relative errors versus the number of occupation $N_e$ in semi-log scale. Left panel: relative errors for the wavefunctions $\Phi$ in PT gauge and $\Psi$ in the Schr\"odinger gauge. Right panel: the commutator bounds on the right-hand-sides of $\partial_t^3 P$ in \cref{lem:der_P} and $\partial_t^3 \rho$ in \cref{lem:der_rho} versus $\norm{H^3\Psi} h^2$ that appears in \cref{lem:LTE_2nd_schd}. 
The commutator bounds (as in PT) are significant smaller than $\norm{H^3\Psi}h^2$ term (as in the Schr\"odinger gauge). 
}
\label{fig:ref_err_ne}
\end{figure}

We also plot the relative errors in 2-norm of the density matrix $\rho$ in \cref{fig:rel_err_rho_ne_2norm} and \cref{fig:rel_err_rho_ne_fnorm}. The errors (measured in both the operator norm and the Frobenius norm) from the PT dynamics is smaller than that from the Schr\"odinger dynamics. 
Furthermore, as $N_e$ increases, the relative error in the Frobenius norm from the PT dynamics in fact decreases. This phenomenon can be intuitively explained as follows. Note that the initial $\sigma_0$ is a diagonal matrix of the following form
$$\begin{pmatrix}
I_{m_0} & 0 & 0 \\
0 & \sigma_* & 0 \\
0 & 0 & 0
\end{pmatrix},
$$
where $m_0$ is the number of fully occupied states and $\sigma_*$ is a diagonal matrix representing the fractional states whose diagonal elements has values in $(0,1)$. 
Then we expect that the fully occupied states are approximately in the near adiabatic regime, and their contribution to the error is much smaller than those from the fractionally occupied ones according to the commutator bound.
In this example, $m_0$ increases with $N_e$, but the size of $\sigma_*$ does not change much with respect to $N_e$. Therefore, the error of the density matrix should be dominated by a small number of orbitals near the Fermi surface.
To verify this statement, we plot in \cref{fig:his} the histogram of the errors in the vector 2-norm for all orbitals. Indeed, as $N_e$ increases, the errors are dominant by only a few orbitals near the corresponding chemical potential $\mu$, and the number of the orbitals with significant errors does not increase with $N_e$. 
On the other hand, the Frobenius norm of the density matrix $\norm{\rho}_F=\Or(\sqrt{N_e})$.
This explains the decay of the relative error of $\rho$ in the PT-dynamics in \cref{fig:rel_err_rho_ne_fnorm}.
By comparison, the histogram of the errors in the vector 2-norm for all orbitals in the Schr\"odinger gauge is provided in \cref{fig:his_schd}. We find that in the Schr\"odinger dynamics, the errors are propagated much more widely along the energy spectrum among a larger number of orbitals.
It is also interesting to note that the maximal magnitude of the error increases significantly with respect to $N_e$ in the Schr\"odinger dynamics, but the maximal error is nearly a constant and is much smaller in the PT dynamics.

\begin{figure}[htbp] \label{fig:ref_err_ne2}
\centering
\subfloat[errors of $\rho$ in 2-norm]{
\includegraphics[scale=0.35]{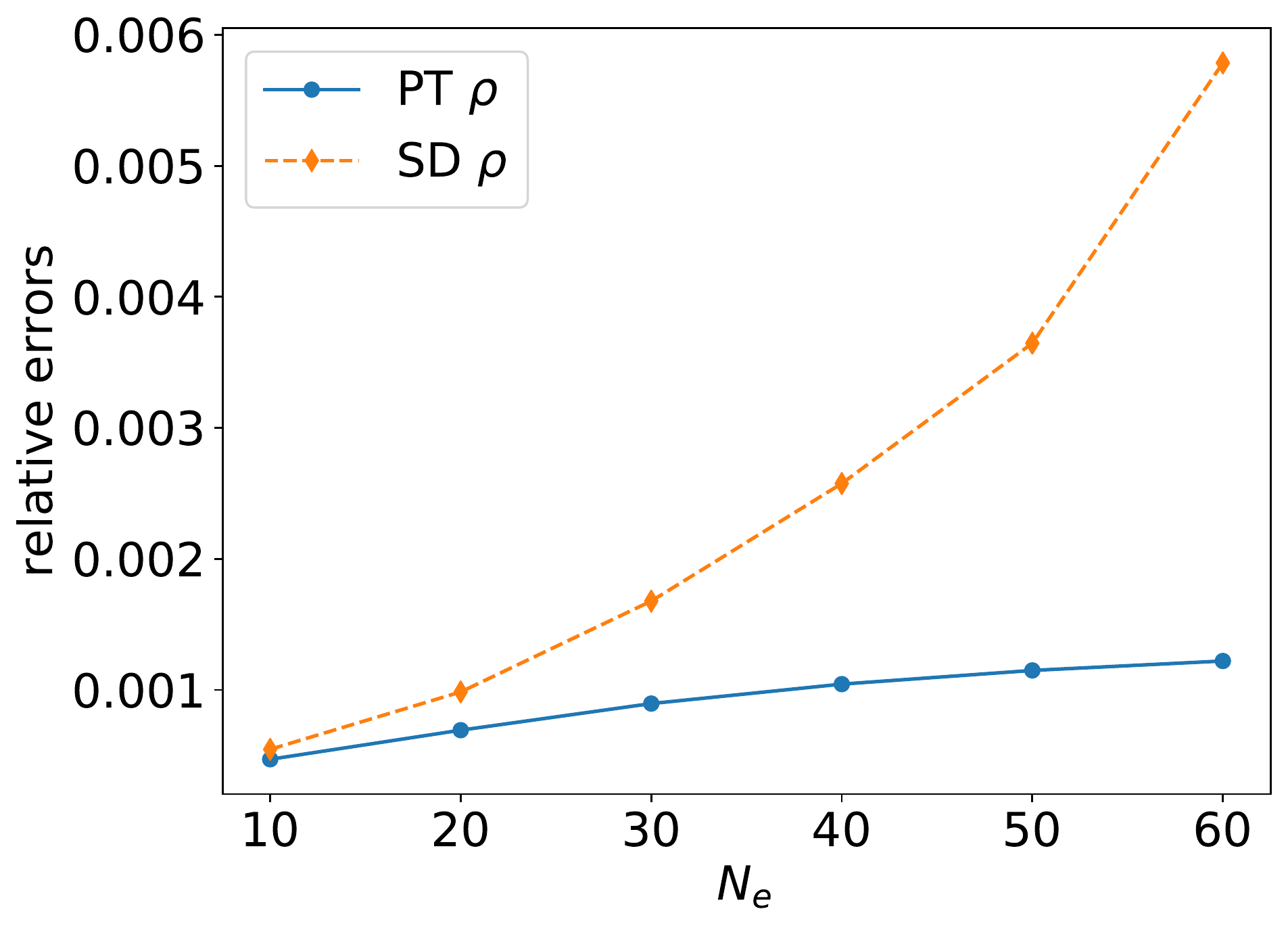} \label{fig:rel_err_rho_ne_2norm}}
\subfloat[errors of $\rho$ in F-norm]{
\includegraphics[scale=0.35]{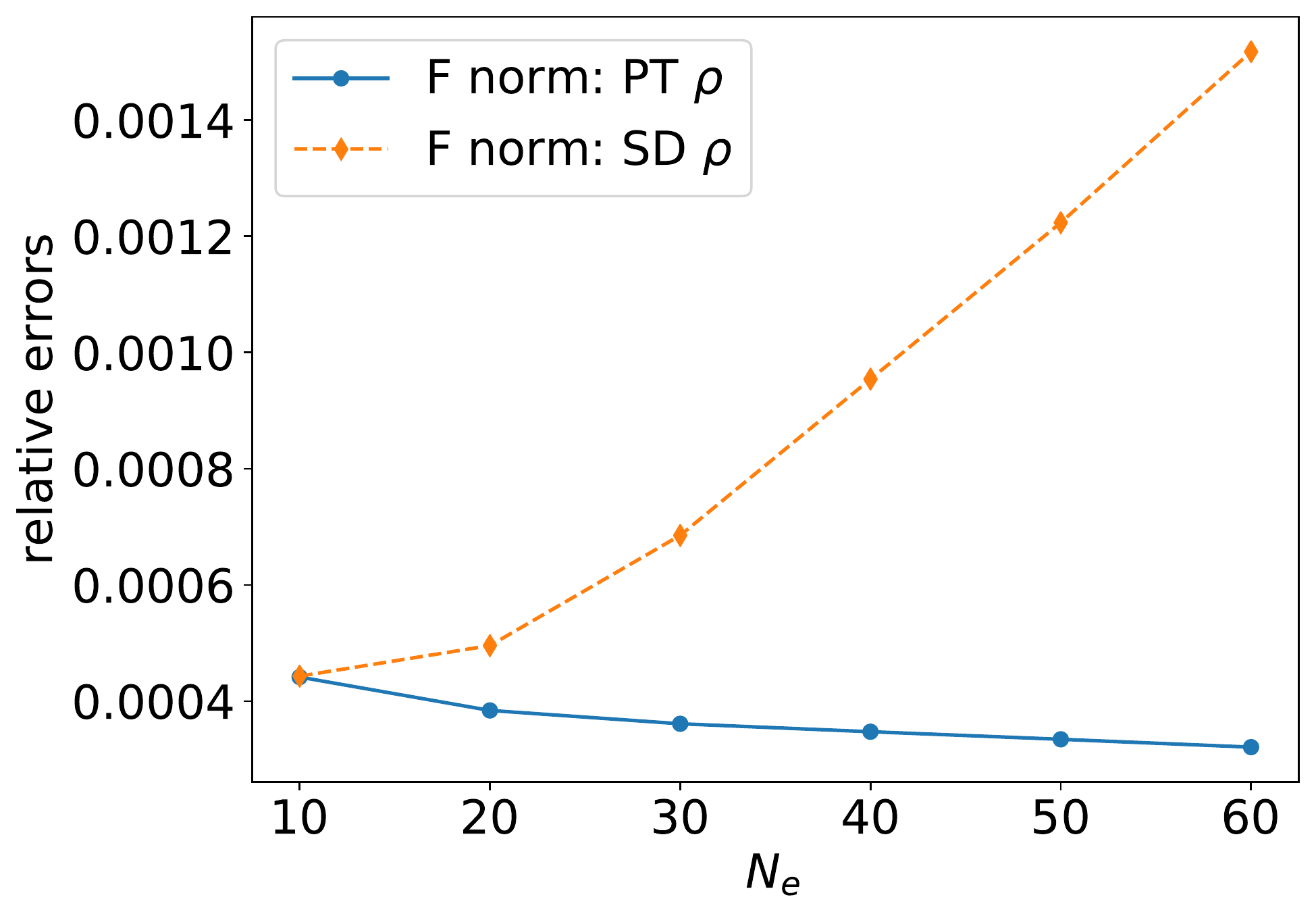} \label{fig:rel_err_rho_ne_fnorm}}
\caption{Plots of the relative errors of $\rho$ versus the number of occupation $N_e$. Left panel: relative errors in 2-norm. Right panel: the relative errors in the Frobenius norm (F-norm). 
}
\end{figure}

\begin{figure}[htbp] 
\centering
\subfloat[$N_e = 10$]{
\includegraphics[scale=0.25]{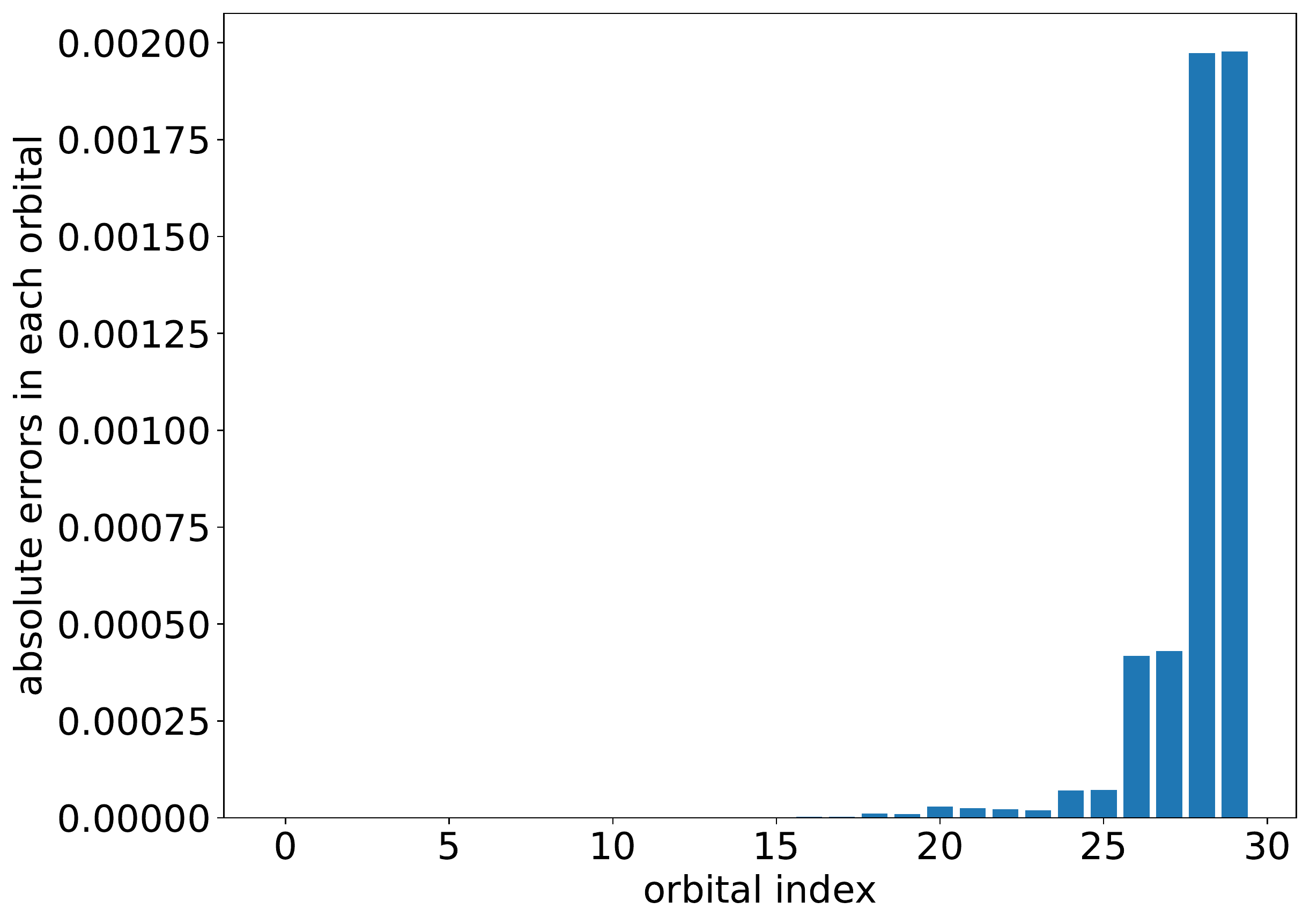} }
\subfloat[$N_e = 20$]{
\includegraphics[scale=0.25]{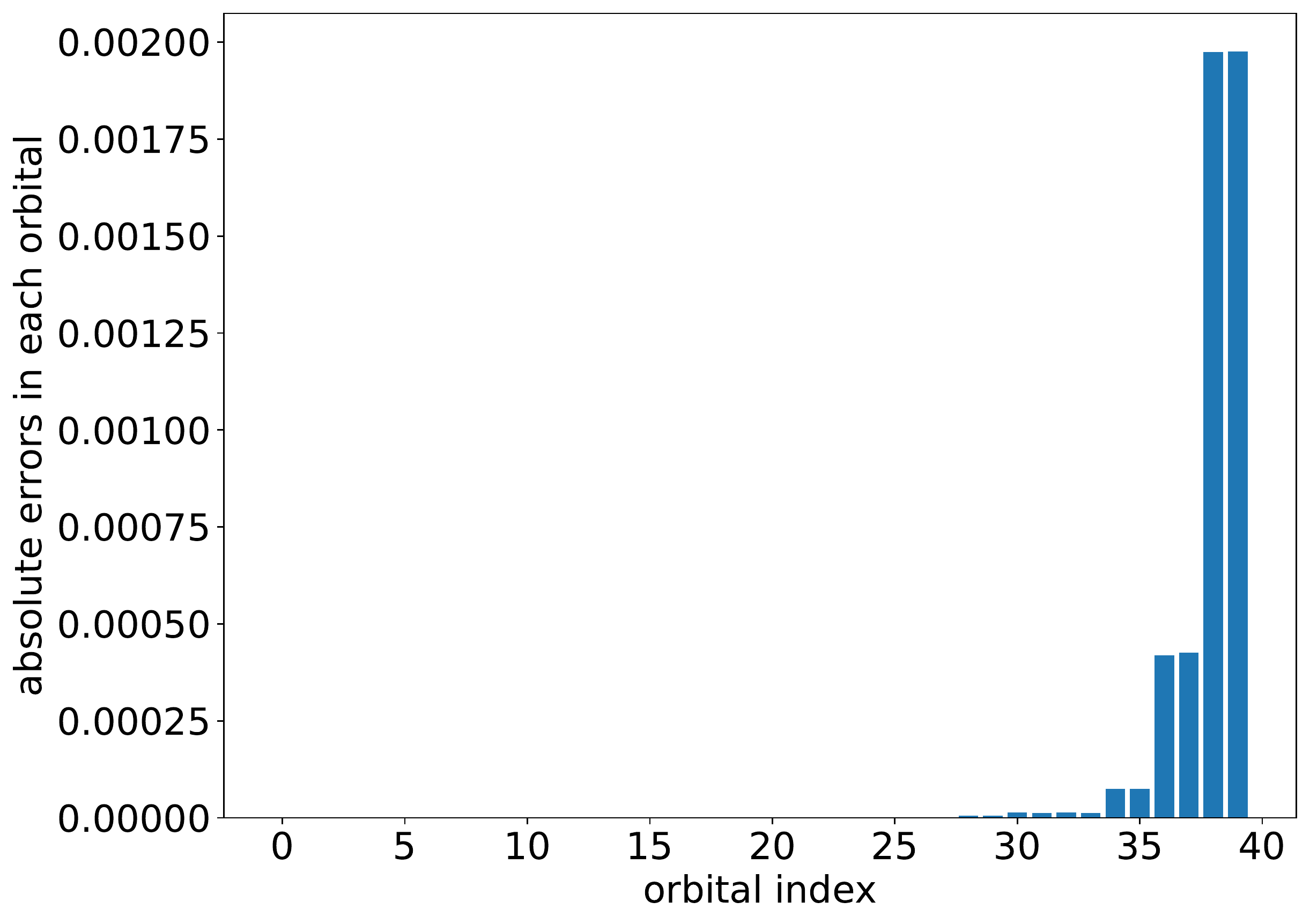} }
\\
\subfloat[$N_e = 30$]{
\includegraphics[scale=0.25]{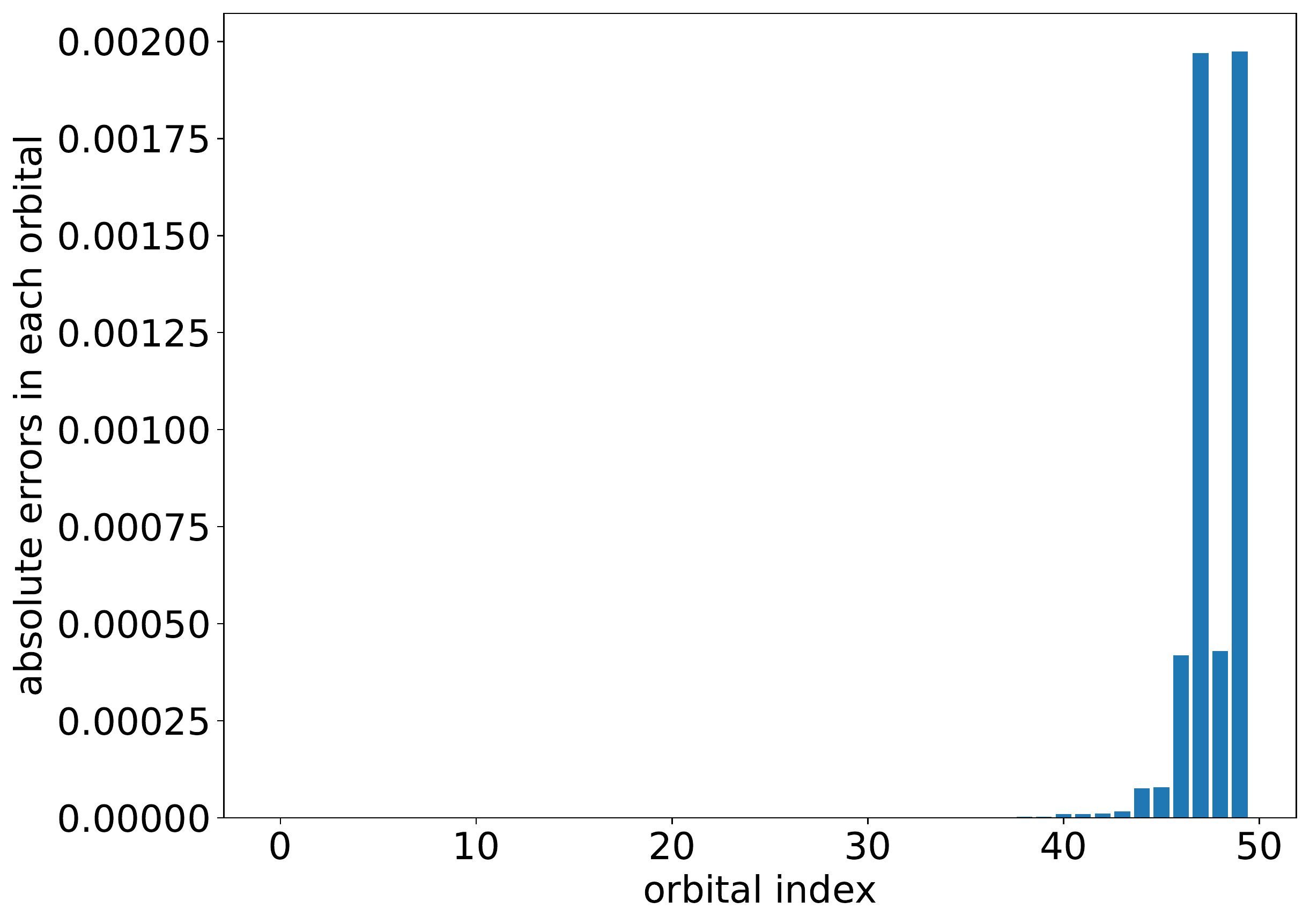} }
\subfloat[$N_e = 40$]{
\includegraphics[scale=0.25]{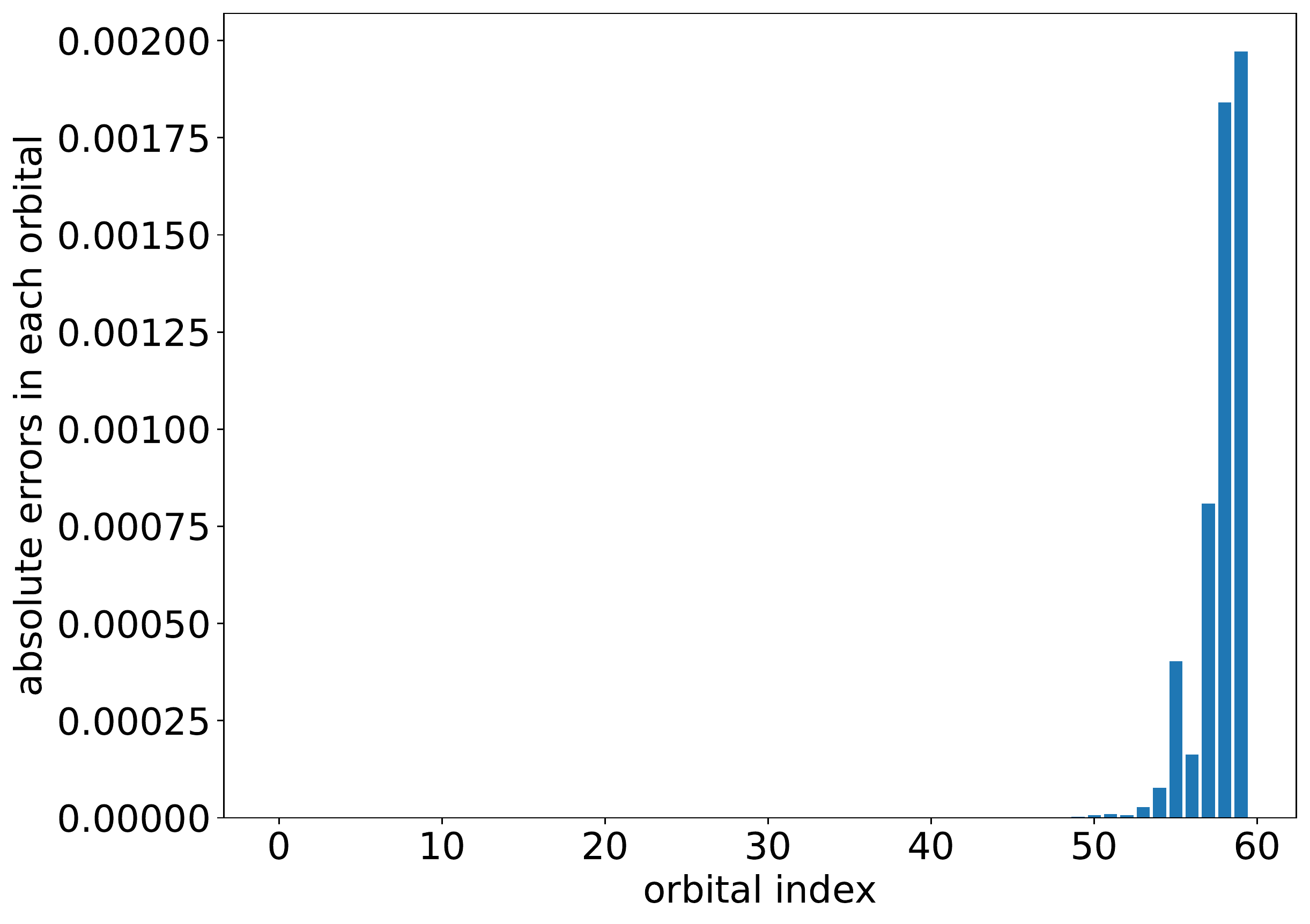} }
\\
\subfloat[$N_e = 50$]{
\includegraphics[scale=0.25]{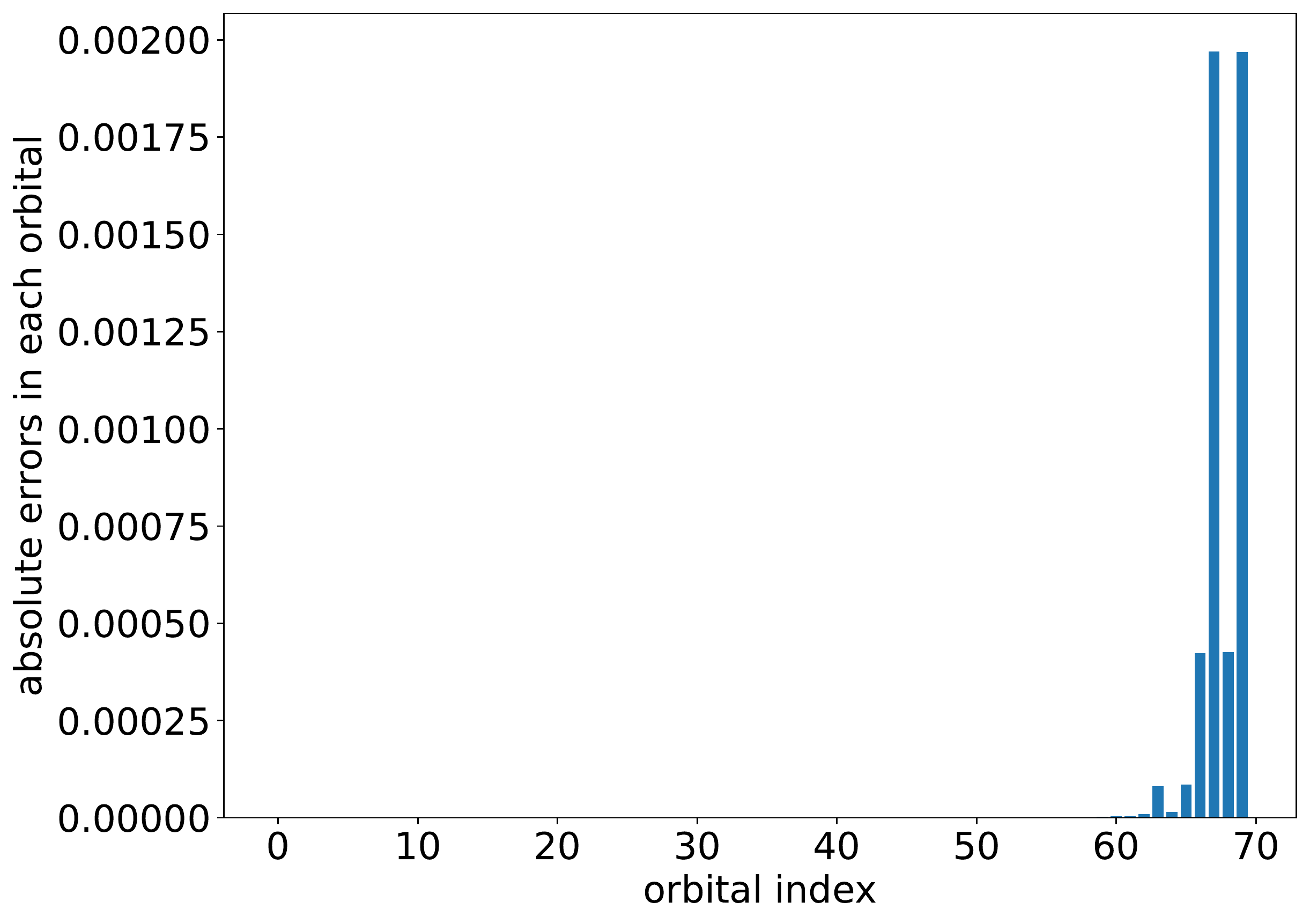} }
\subfloat[$N_e = 60$]{
\includegraphics[scale=0.25]{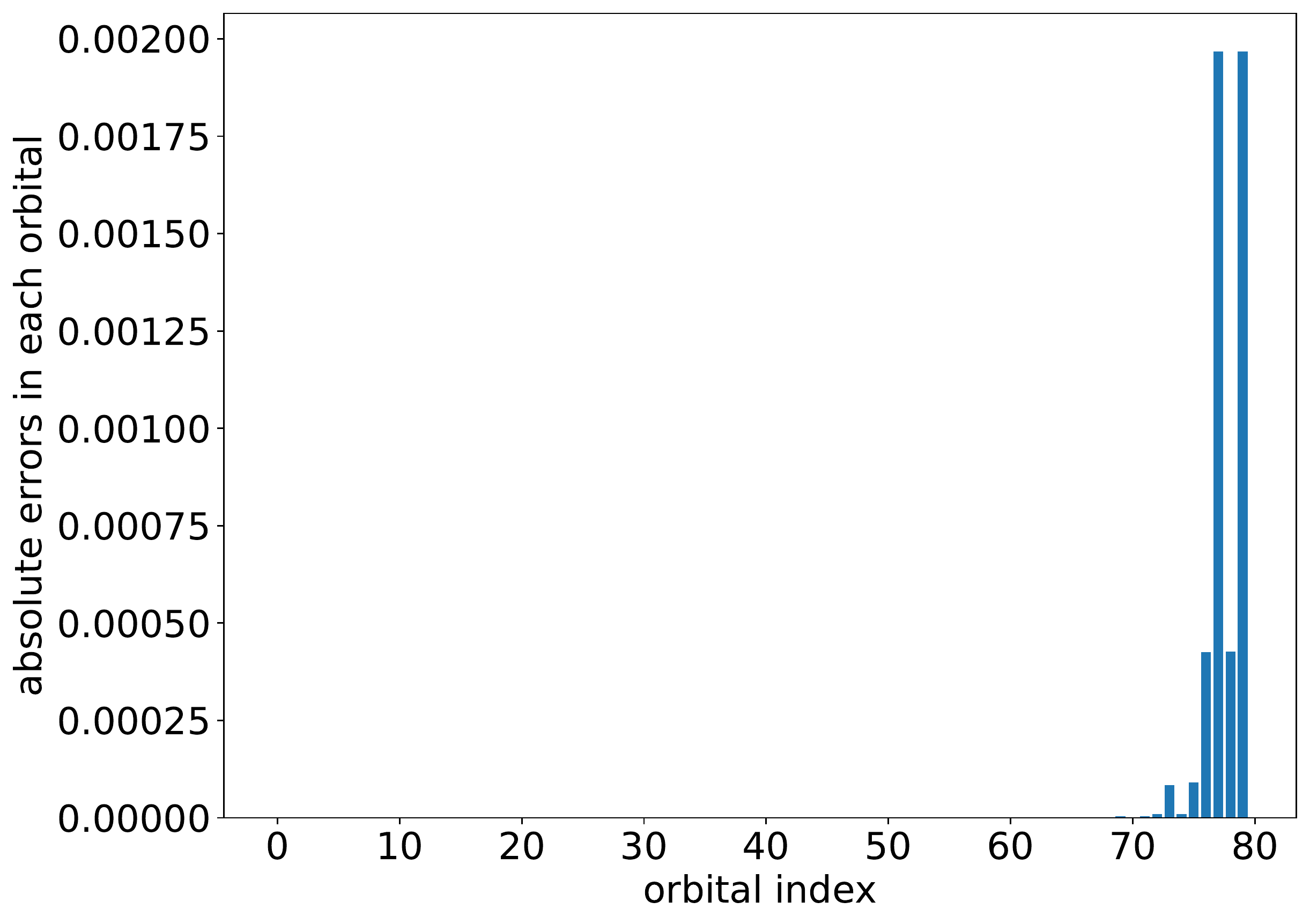} }
\caption{Plot of the error histogram of all orbitals for various $N_e$ in the PT dynamics.
}
\label{fig:his}
\end{figure}

\begin{figure}[htbp] 
\centering
\subfloat[$N_e = 10$]{
\includegraphics[scale=0.25]{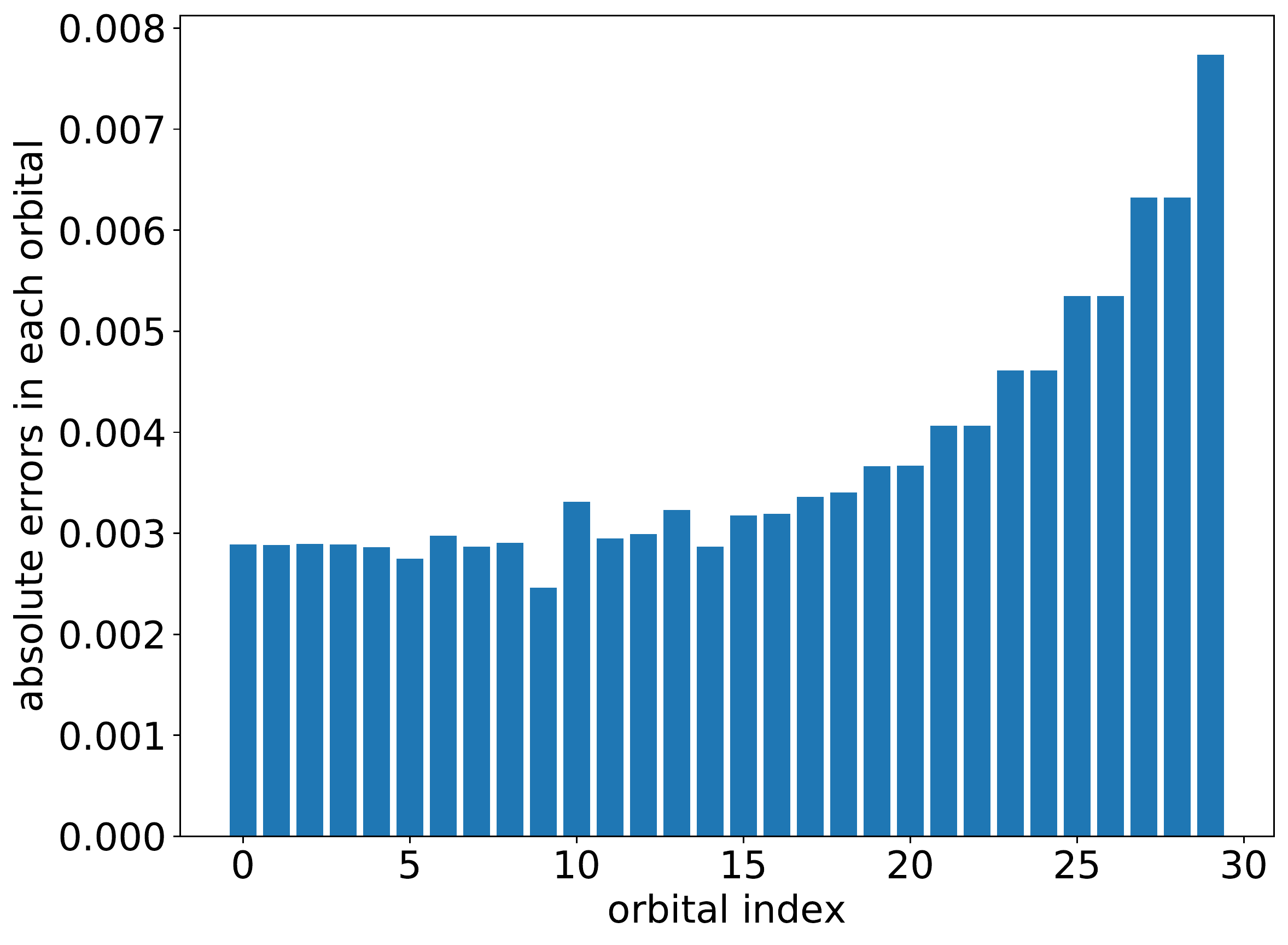} }
\subfloat[$N_e = 20$]{
\includegraphics[scale=0.25]{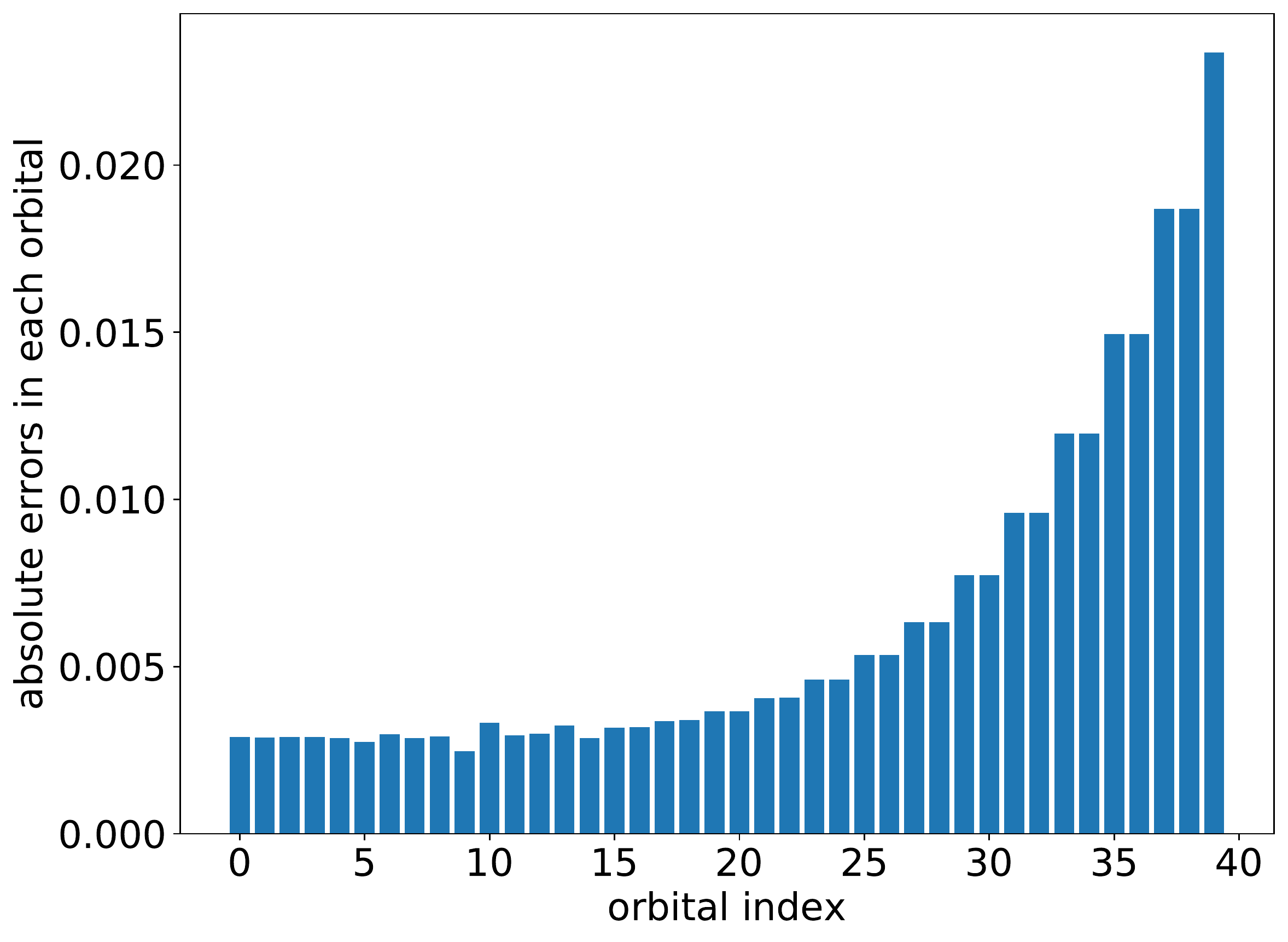} }
\\
\subfloat[$N_e = 30$]{
\includegraphics[scale=0.25]{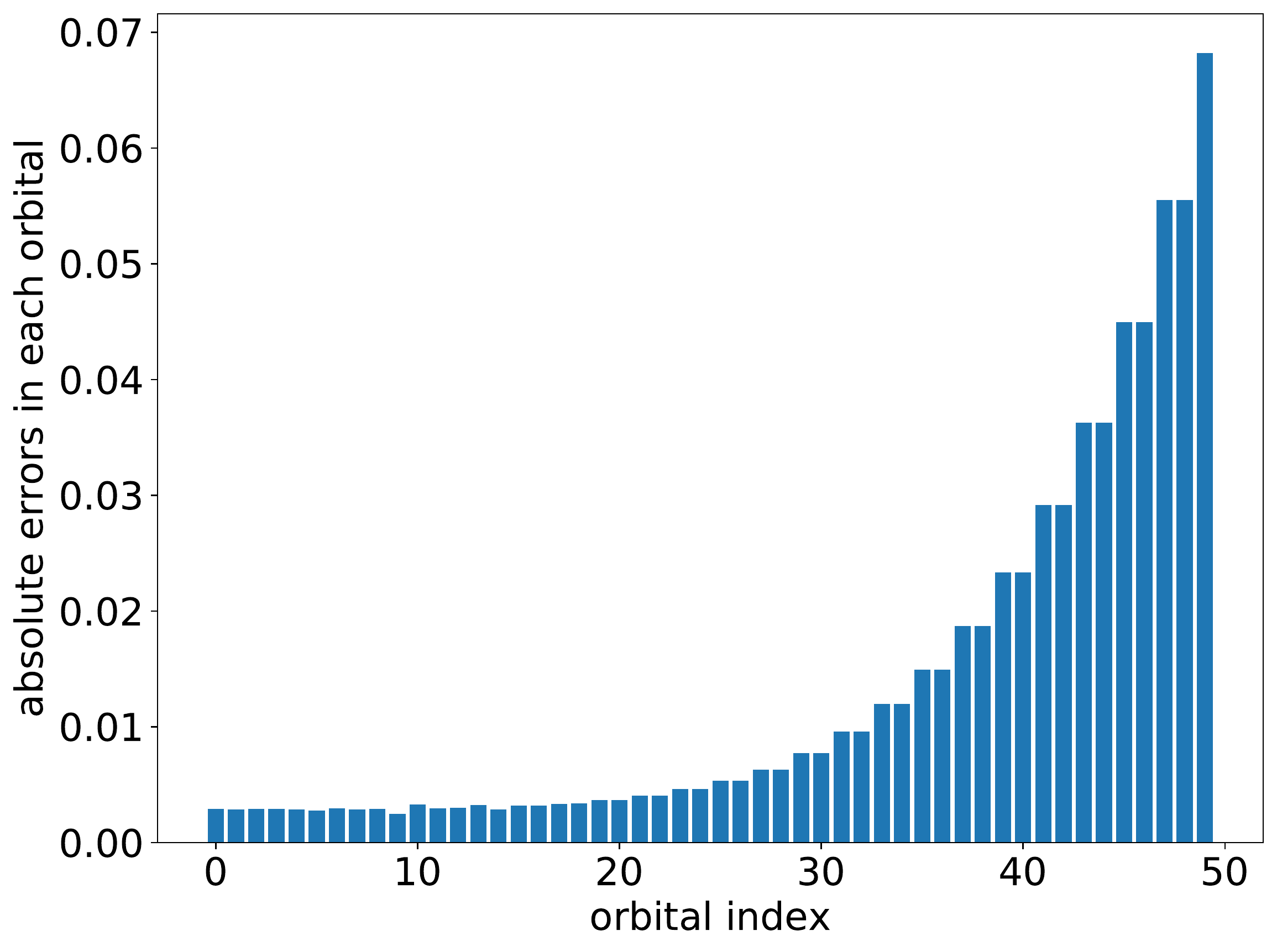} }
\subfloat[$N_e = 40$]{
\includegraphics[scale=0.25]{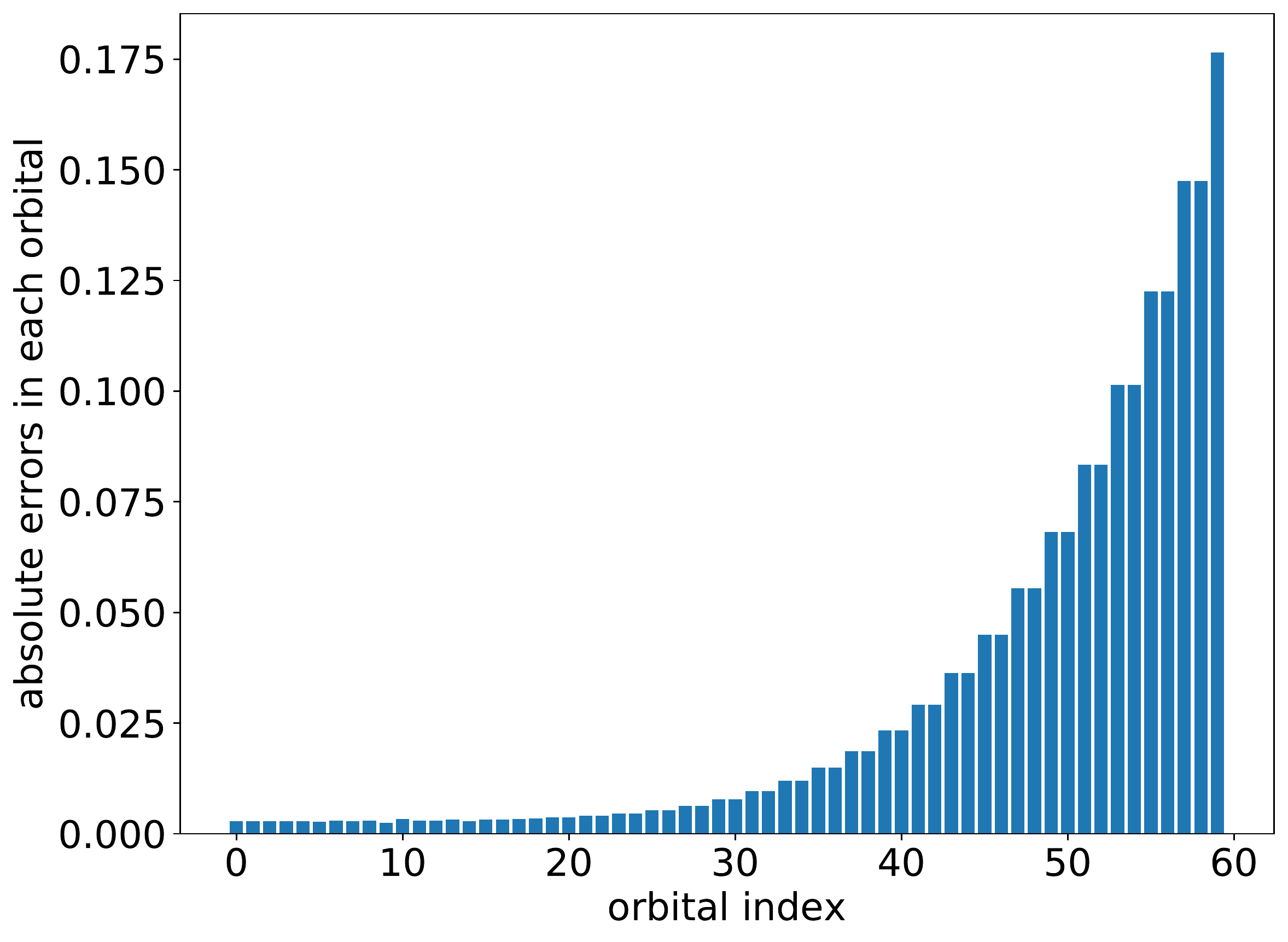} }
\\
\subfloat[$N_e = 50$]{
\includegraphics[scale=0.25]{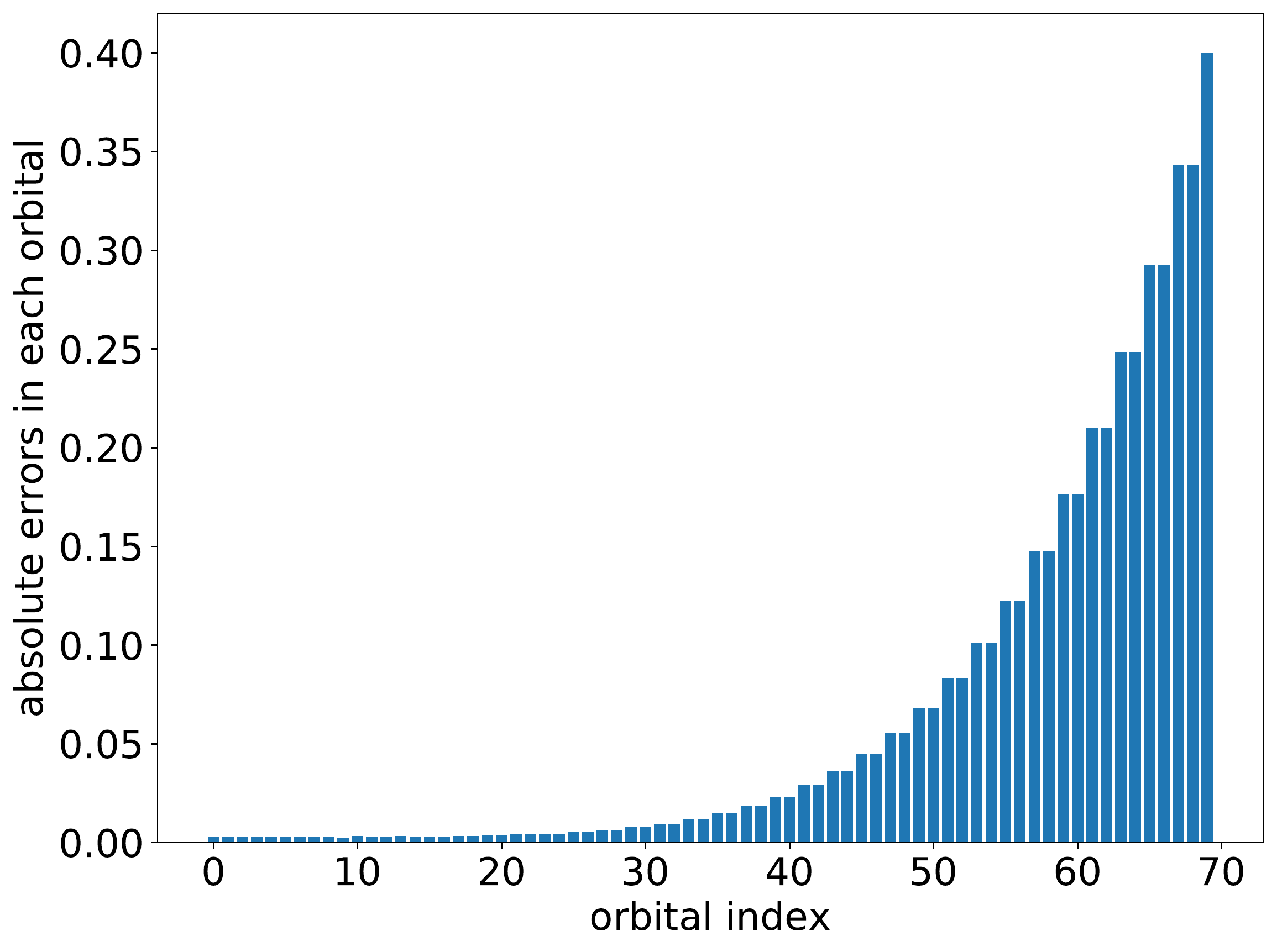} }
\subfloat[$N_e = 60$]{
\includegraphics[scale=0.25]{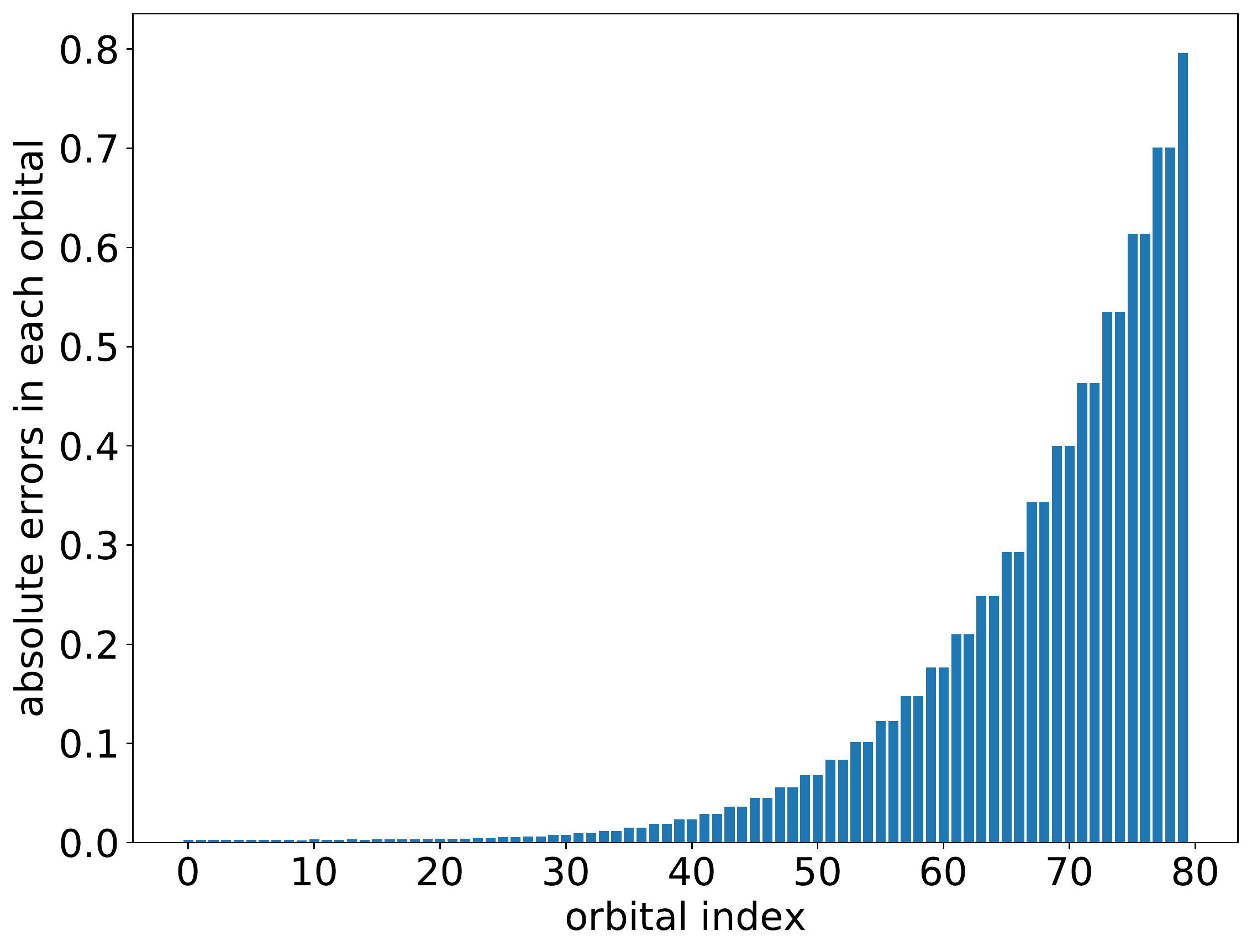} }
\caption{Plot of the error histogram of all orbitals for various $N_e$ in the Schr\"odinger dynamics. 
}
\label{fig:his_schd}
\end{figure}

Finally, we demonstrate that the PT dynamics remains equally effective in the nonlinear regime. The rt-TDDFT Hamiltonian takes the following general form
\begin{equation}
H(t, \rho(t)) = -\frac{1}{2}\Delta + V_{\text{ext}}(x,t) + V_\text{Hxc}[\rho(t)] + V_\text{X}[\rho(t)],
\label{eqn:ham_rttddft}
\end{equation}
where $V_\text{ext}$ represents the electron-ion interaction and when the external field changes with respect to time, $V_\text{ext}$ may also depend on time $t$. $V_\text{Hxc}$ is the Hartree and local exchange-correlation contribution and depends only on the diagonal part of $\rho(t)$, and $V_\text{X}$ is the Fock exchange operator depending on the entire $\rho(t)$. More specifically, $V_\text{X}$ is an integral operator defined by
\[
[(V_\text{X}[\rho])\phi](x) =
 -\int K(x,y) \rho(x,y) \phi(y) \,dy
\]
with the kernel $K(x,y)$ represents the electron-electron interaction. 

Following \cref{eqn:ham_rttddft}, we consider the following model problem
\begin{equation} \label{eqn:num_ex_H_nl}
H(t,\rho) = -\frac{1}{2}\Delta + V(x) + W(x,t) + U[\rho],
\end{equation}
where $V = x^2$, and $W$ is as defined in \eqref{eqn:Wt} and the nonlinear term $U[\rho]$ models
$V_X[\rho]$ with the Yukawa kernel
$$K(x,y) = \frac{2\pi}{\kappa \epsilon_0} e^{- \kappa |x-y|}.$$ 
Note that as $\kappa \to 0$, the Yukawa kernel approaches to the Coulomb interaction that diverges in one dimension and hence is typically used in place of the bare Coulomb interaction for one-dimensional problems. The parameters are chosen as $\epsilon_0 = 100$ and $\kappa = 0.01 $ so that the range of the electrostatic interaction is sufficiently long. Here $\mu = 148.99$ so that $N_e = 60$ and we choose $N = 80$. We simulate the system using PT-IM and SD-IM up to \ $T_{\text{final}} = 0.5$ and compare the relative errors. As shown in \cref{fig:error_dt_nl}, the errors from PT-IM are significantly smaller. A comparison of the dipole moment is presented in \cref{fig:dip_nl}. We also compute the dipole moment using $h = 0.01$ and compare the results with the reference solution obtained using SD-IM with a very fine time step $h = 0.0001$. It can be seen that the result using the PT dynamics agrees well with the reference, which is not the case for the Schr\"odinger dynamics with the same step size.

\begin{figure}[htbp]
\centering
\subfloat[Relative Errors]{
\includegraphics[scale=0.35]{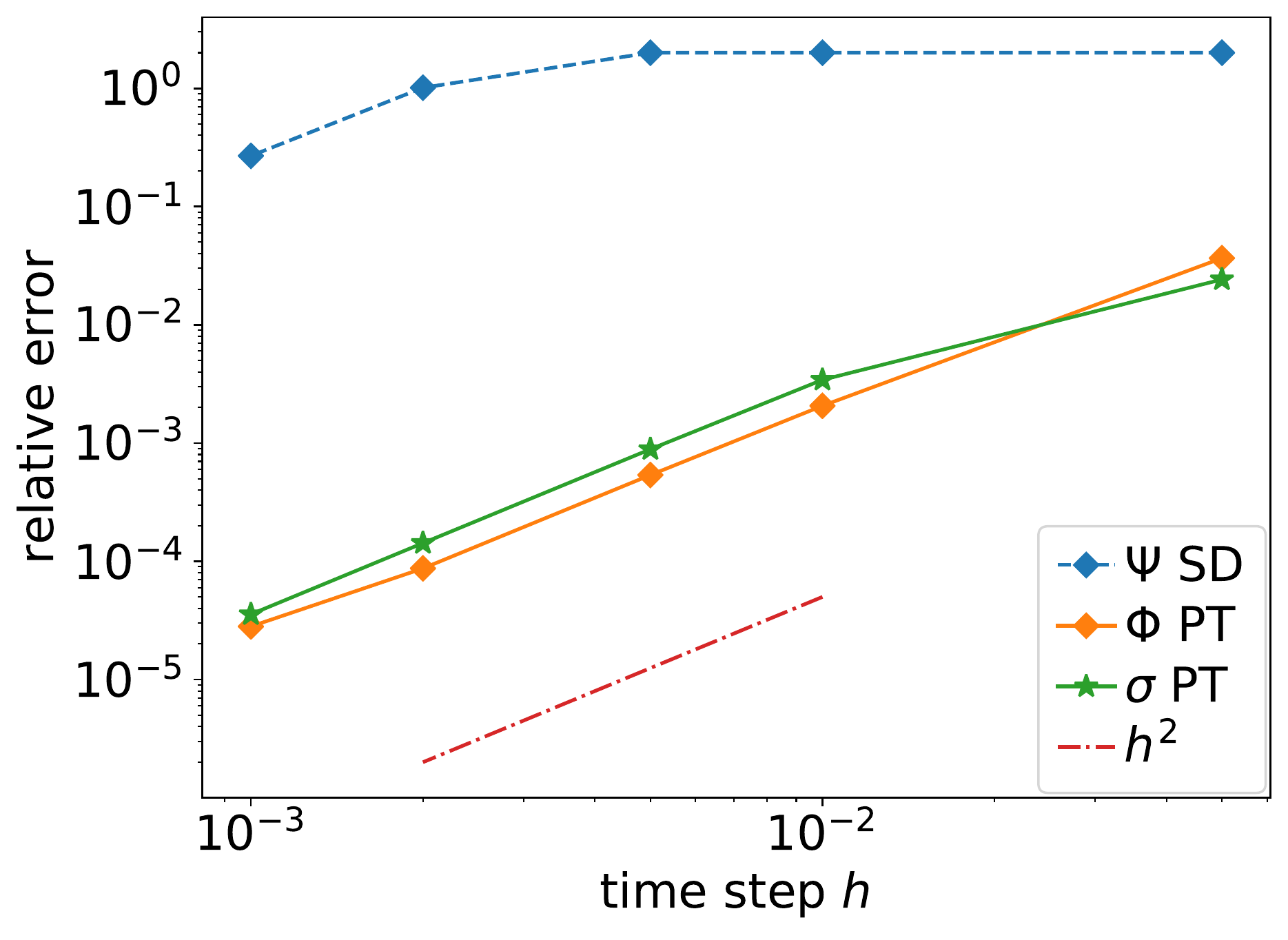} \label{fig:error_dt_nl}}
\subfloat[Dipole moment]{
\includegraphics[scale=0.35]{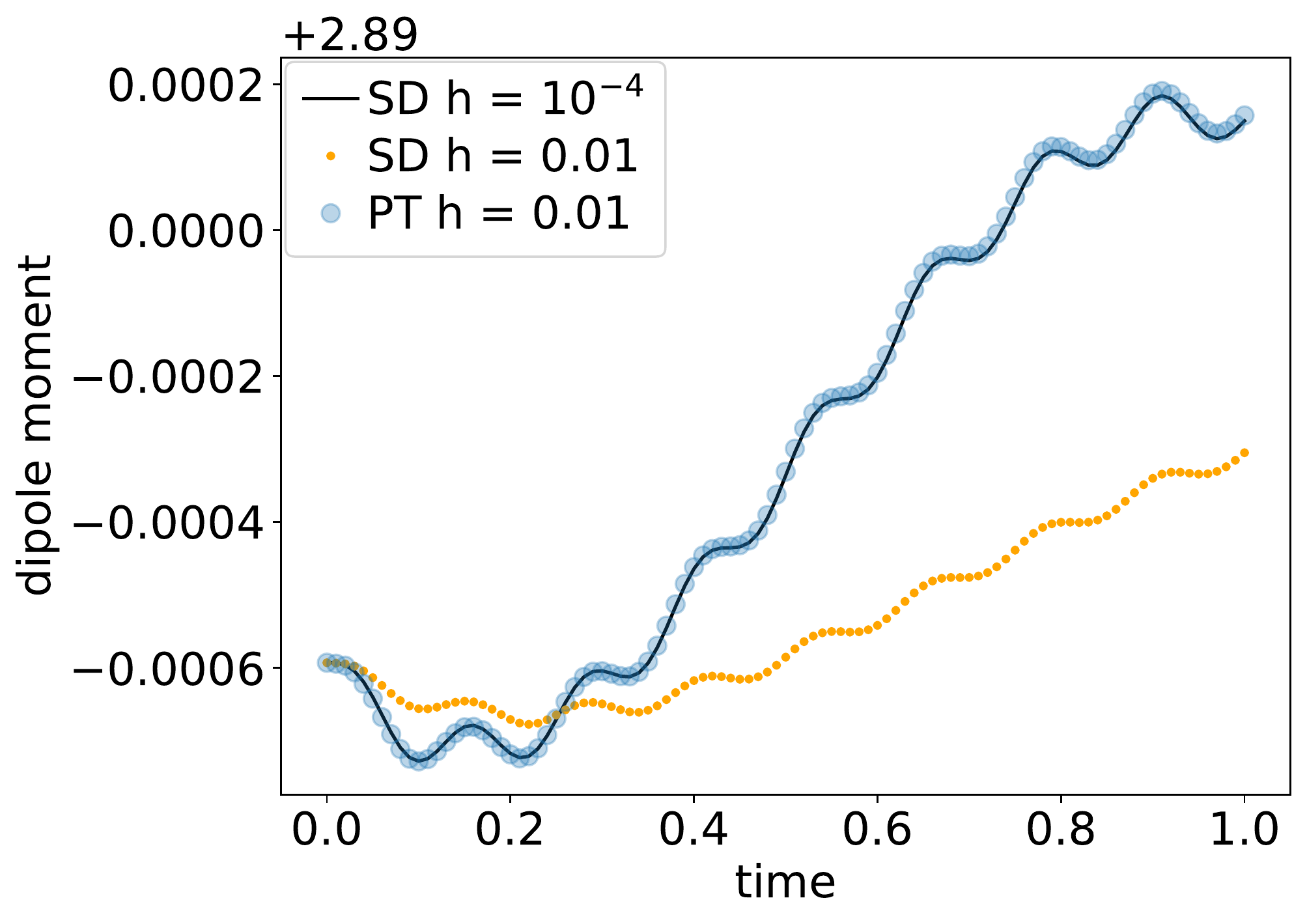} \label{fig:dip_nl}}
\caption{Left Panel: The log-log plot of the relative errors of $\Phi$, $\sigma$ in the model nonlinear rt-TDDFT calculation with \cref{eqn:num_ex_H_nl}, computed via PT-IM and SD-IM. Right Panel: a comparison of the dipole moment. 
} 
\end{figure}

\section{Conclusion}
In this paper, we have introduced the PT dynamics for mixed quantum states, which generalizes the PT dynamics for pure states presented in \cite{AnLin2020}. Both the PT and Schr\"odinger dynamics employ the low-rank structure of the density matrix, and can produce the same density matrix and all derived physical observables (such as dipole moments) as those from the von Neumann equation in the continuous time limit. The PT dynamics differ from the Schr\"odinger dynamics in terms of the choice of the gauge. In particular, the PT gauge yields the slowest possible dynamics for the wavefunctions. This allows us to significantly increase the time step size in the numerical simulation while maintaining accuracy.

As a concrete example, we propose the parallel transport-implicit midpoint (PT-IM) scheme, which is an implicit method suitable for treating Hamiltonians with a large spectral radius. It also preserves certain trace conditions and the orthogonality of the wavefunctions. We establish a new error bound for the PT dynamics, where all terms in the error bounds involve either the commutator of the Hamiltonian (and its derivatives) and the density matrix (or the associated spectral projector). As a comparison, the error analysis of the Schr\"odinger dynamics is also provided, which does not exhibit such commutator scaling. This new error bound, together with various numerical experiments, justifies the advantage of the PT dynamics for the general mixed states, where the dynamics can be nonlinear, and beyond the near adiabatic regime. Implementation of PT-IM for rt-TDDFT calculations of large scale real metallic systems is under progress.

\section*{Acknowledgment}

This work was partially supported by the Department of Energy under Grant No. DE-SC0017867 and No. FWP-NQISCCAWL (D.A., L.L.), and by the National Science Foundation under No. DMS-1652330, OMA-2016245 (L.L.).

\appendix

\section{Alternative derivation of the parallel transport dynamics using the tangent space formulation}\label{append:pt_tangent}

In this section, we provide an alternative derivation of the PT dynamics using the tangent space formulation. The presentation of this derivation consists of three parts: we first write down \cref{eqn:rhot_rotate_mixed} as well as the domain of the low-rank factors. We then derive the equation of motion of the low-rank factors of the rank-$N$ density matrix. Finally, we introduce the optimal gauge (i.e. PT gauge).

Let $\rho$ be of rank $N$ ($N_e \le N< N_g$). Recall \cref{eqn:rhot_rotate_mixed}: \begin{equation} \label{eqn:rhot_rotate_mixed_appendix}
\rho(t)=\Phi(t)\sigma(t)\Phi^{\dag}(t),
\end{equation}
where $\sigma(t)$ is an $N \times N$ positive semidefinite Hermitian matrix, and $\Phi$ is an $N_g \times N$ complex-valued matrix that satisfies $\Phi^\dagger \Phi = I_N$, in other words, $\Phi$ belongs to the Stiefel manifold $\text{St}(N,N_g)$ defined as
$$\text{St}(N,N_g) = \{\Phi \in \mathbb{R}^{N_g\times N}: \Phi^\dagger \Phi = I_N\}.
 $$
As is explained in \cref{sec:pt_mixed}, this decomposition \eqref{eqn:rhot_rotate_mixed_appendix} in fact admits an equivalence relationship 
$(\Phi, \sigma) \equiv (\Phi U, U^\dagger \sigma U)$, namely, for any $N \times N$ unitary matrix $U$, 
$$\rho= \Phi\sigma \Phi^\dagger = (\Phi U) (U^\dagger\sigma U)(U^\dagger \Phi^\dagger).
$$

Consider the infinitesimal variation of the tangent map of $(\Phi,\sigma) \mapsto \Phi \sigma \Phi^\dagger$. As is shown in {\cite[Lemma~4]{CaoLu2018}}, the tangent space of the Stiefel manifold admits the parametrization $\I \omega \Phi$, where $\omega$ is a Hermitian matrix of size $N_g$. 
Denote
\begin{equation*} 
\dot \Phi =\I \omega \Phi, \quad \dot \sigma = \xi,
\end{equation*}
where $\xi$ is a traceless Hermitian matrix of size $N$. The infinitesimal variation of $\rho$ can thus be represented as
\begin{equation*}
\I [\omega, \rho] + \Phi \xi \Phi^\dagger = \Phi (\I[\Phi^\dagger \omega \Phi, \sigma] + \xi) \Phi^\dagger.
\end{equation*}
We then project $\dot \rho$ onto this tangent space by minimizing the distance between them, namely, 
$$\min \,  \left\Vert-\I[H(t, \rho(t)), \rho(t)] - \I [\omega, \rho] - \Phi \xi \Phi^\dagger\right\Vert_F.$$
Hereafter, we drop the $(t,\rho)$ in $H$ for simplicity. The two stationary conditions in $\omega$ and $ \xi$ read
\begin{align}
\left[-\I [H,\rho]-\I[\omega, \rho] - \Phi \xi \Phi^\dagger, \rho\right] = 0 ,\label{eq:stationary_omega} \\
\Phi^\dagger (-\I[H,\rho] - \I[\omega, \rho] - \Phi \xi \Phi^\dagger )\Phi = \lambda I_N,\label{eq:stationary_xi} 
\end{align}
where $\lambda$ is the Lagrange multiplier introduced to satisfy the traceless condition of $\xi$. By taking trace on both sides of \eqref{eq:stationary_xi}, we find that
$$\lambda = \frac{1}{N} \Tr\left[
\Phi^\dagger \left(-\I[H,\rho] - \I [\omega, \rho] \right) \Phi
\right] =0,$$
because for any $X$,
\[
\Tr(\Phi^\dagger [X,\rho] \Phi) = \Tr(\Phi^\dagger X \Phi \sigma \Phi^\dagger \Phi - \Phi^\dagger \Phi \sigma \Phi^\dagger X \Phi)
=  \Tr(\Phi^\dagger X \Phi \sigma - \sigma \Phi^\dagger X \Phi)
= 0.
\]
Define projection operators
$$P : = \Phi \Phi^\dagger, \quad Q = I -P,$$
and one can express $\Phi \xi \Phi^\dagger$ using \eqref{eq:stationary_xi} as
\begin{equation} \label{eq:xi}
\xi  = \Phi^\dagger \left(-\I [H,\rho] - \I [\omega, \rho] \right) \Phi,
\quad \Phi \xi \Phi^\dagger = P \left(-\I[H,\rho] - \I [\omega, \rho] \right) P.
\end{equation}
Together with \cref{eq:stationary_omega}, we obtain
$$Q\left(-\I H\rho - \I \omega \rho \right) \rho =\rho \left(\I \rho H + \I\rho \omega \right) Q.$$
Note that $P\rho = \rho$ and hence the left-hand side stays in the range of $Q$ while the right-hand side remains in the range of $P$. By orthogonality, both sides of the equation vanish, which imposes some constraints on $Q\omega P$ and $P\omega Q$. To be specific, one has
$$Q \omega \rho^2 = -Q H\rho^2 \quad \iff \quad Q \omega \Phi\sigma^2 \Phi^\dagger = - Q H \Phi\sigma^2 \Phi^\dagger.
$$
Right multiply $\Phi(\sigma^{-1})^2 \Phi^\dagger$, one finds that
$Q \omega P = - QHP.$
Similarly, we obtain $P\omega Q = -  PHQ.$
The general solution of this system of matrix equations is 
\begin{equation*} 
\omega = -  Q H P -  PHQ - PGP - QGQ,
\end{equation*}
where $G$ is any Hermitian matrix due to the hermiticity of $\omega$. Since $\dot \Phi = \I\omega \Phi$, the equation for $\Phi$ can be written as
\begin{align*}
\I \dot \Phi & = - \omega \Phi= Q H P \Phi+ PHQ \Phi +  PGP \Phi + QGQ \Phi\\
& = QH \Phi +  PG \Phi,
\end{align*}
where the fact that $P\Phi = \Phi$ and $Q\Phi = 0$ is used. For the equation of $\xi$, \eqref{eq:xi} yields
\begin{align*}
\I \dot \sigma & = \I \xi = \I  \Phi^\dagger \left(-\I[H,\rho] - \I [\omega, \rho] \right) \Phi \\
& = [\Phi^\dagger H \Phi,\sigma] - [\Phi^\dagger G \Phi, \sigma].
\end{align*}
Finally, we arrive at the dynamics for $\Phi$ and $\sigma$ in a closed form
\begin{align} \label{eq:dyn_w_gauge}
\I \partial_t \Phi &=   (I- \Phi \Phi^\dagger)H(t,\Phi \sigma \Phi^\dagger) \Phi + \eps \Phi \Phi^\dagger G \Phi, \\
\I \partial_t\sigma & = \left[\Phi^\dagger\left(H(t,\Phi \sigma \Phi^\dagger)- G\right)\Phi, \sigma\right],
\end{align}
where the Hermitian matrix $G$ is an extra degree of freedom to be chosen.

The next step is to find the optimal choice of $G$ such that the dynamics of $\Phi$ changes the slowest, i.e. to find $G$ such that 
\begin{equation} \label{eq:opt}
\min\|\dot \Phi\|_F^2 = \min \Tr(\dot \Phi^\dagger \dot \Phi) . 
\end{equation}
The norm can be split into two parts 
\begin{align*}
\|\dot \Phi\|_F^2 &= \|P \dot \Phi\|_F^2 + \| Q\dot \Phi\|_F^2 \\
& = \|\Phi \Phi^\dagger G \Phi\|_F^2 + \|QH \Phi\|_F^2.
\end{align*}
The Frobenius norm of the second term reads 
\[
\Tr(\Phi^\dagger H^\dagger Q^\dagger Q H \Phi) = \Tr(Q^\dagger Q H \Phi \Phi^\dagger H^\dagger ) 
 = \Tr(Q^\dagger Q H(t, \rho) P H^\dagger(t, \rho) ) ,
\]
which is independent of the gauge choice. Therefore, to optimize \eqref{eq:opt} one can choose $G = 0$. Now we arrive at the parallel transport dynamics
\begin{align} \label{eq:pt_main}
\I \partial_t \Phi &=   (I - \Phi \Phi^\dagger)H(t,\Phi \sigma \Phi^\dagger) \Phi , \\
\I \partial_t\sigma & = \left[\Phi^\dagger H(t,\Phi \sigma \Phi^\dagger)\Phi, \sigma\right], \notag
\end{align}
which is equivalent to the PT dynamics \eqref{eqn:pt_mix} derived in \cref{sec:pt_mixed}.

It is worth pointing out that the Schr\"odinger gauge in fact corresponds to the choice $G =H$ in \eqref{eq:dyn_w_gauge} that gives rise to
\begin{align}\label{eq:sch_main}
\I \partial_t \Phi &=   H(t,\Phi\sigma \Phi^\dagger)\Phi, \\
\partial_t\sigma & = 0.\notag
\end{align}
This immediately implies that the number of occupied orbitals remains unchanged throughout the evolution, which verifies the validity of the solution in the Schr\"odinger dynamics in \cref{eqn:rho_def_schd}.

\section{Anderson's mixing method for solving nonlinear equations}\label{sec:anderson}

Let $x_k$ be the value of $(\Phi_{n+1},\sigma_{n+1})$ at the $k$-th iteration, and define
\[
r_k=x_k-T(x_k), \quad s_k=x_k-x_{k-1}, \quad y_k=r_k-r_{k-1},
\]
then Anderson's mixing method obtains an update $x_{k+1}$ according to
\[
x_{k+1}=x_k-C_kr_k.
\]
The matrix $C_k$ can be viewed as an approximation to the Jacobian of the mapping $x\mapsto x-T(x)$. Define 
\[
S_{k}=\left(s_{k}, s_{k-1}, \ldots, s_{k-\ell}\right), \quad Y_{k}=\left(y_{k}, y_{k-1}, \ldots, y_{k-\ell}\right),
\]
where $\ell$ is called the mixing dimension. The matrix $C_k$ is obtained by solving the minimization problem
\begin{equation}
\begin{split}
\min_{C} & \quad \frac{1}{2}\left\|C-C_{0}\right\|_{F}^{2} \\
\text { s.t. } & \quad S_{k}=C Y_{k},
\end{split}
\label{eqn:min_anderson}
\end{equation}
where $C_0$ is an initial guess to the Jacobian. In the absence of further information, we may choose $C_0=\alpha I$ for some constant $\alpha$. The solution to the minimization problem
\[
C_{k}=C_{0}+\left(S_{k}-C_{k-1} Y_{k}\right) Y_{k}^{+},
\]
where $Y_{k}^{+}=\left(Y_{k}^{\dag} Y_{k}\right)^{-1} Y_{k}^{\dag}$ is the Moore-Penrose pseudoinverse. 
Therefore, the update rule for $x_{k+1}$ is
\begin{equation}
x_{k+1}=x_{k}-C_{0}\left(I-Y_{k} Y_{k}^{+}\right) r_{k}-S_{k} Y_{k}^{+} r_{k}.
\label{eqn:anderson_update}
\end{equation}

For the initial step, we may simply take $x_0$ to be the vectorized form of $(\Phi_{n},\sigma_n)$ at the previous time step. In order to assess the computational complexity, we assume that the application of the Hermitian matrix to wavefunctions in the form of $H\Phi$ can be obtained at cost $\Or(N_gN^2)$ without explicitly forming $H$. Then the cost of updating $x_k$ to $x_{k+1}$ is $\Or(N_g N^2+N^3)$, and thus governed by $\Or(N_g N^2)$ under the assumption $N_g \gg N$.

\bibliographystyle{abbrv}
\bibliography{pt,pt_di}

\end{document}